\documentclass[numbers,webpdf,imanum]{ima-authoring-template}

\usepackage{amscd,verbatim}
\usepackage{upgreek,comment}
\usepackage{mystylefile_IMA}
\overfullrule=0pt
\graphicspath{{Fig/}}

% line numbers
%\usepackage[mathlines, switch]{lineno}
%\usepackage[right]{lineno}

\theoremstyle{thmstyletwo}%
\newtheorem{theorem}{Theorem}[section]% meant for sectionwise numbers
%% optional argument [theorem] produces theorem numbering sequence instead of independent numbers for Proposition
%
%%\newtheorem{proposition}{Proposition}% to get separate numbers for theorem and proposition etc.
%
\newtheorem{remark}{Remark}[section]

\numberwithin{equation}{section}

\begin{document}

\DOI{DOI HERE}
\copyrightyear{}
\vol{00}
\pubyear{}
\access{Advance Access Publication Date: Day Month Year}
\appnotes{Paper}
\copyrightstatement{Published by Oxford University Press on behalf of the Institute of Mathematics and its Applications. All rights reserved.}
\firstpage{1}

\title[Randomized Normal Equations]{Solution of Least Squares Problems\\ with Randomized Preconditioned Normal Equations}

\author{Ilse C.F. Ipsen\footnote{The work was supported in part by NSF grant DMS-1760374, NSF grant CCF-2209510, and DOE grant DE-SC0022085.}
\address{\orgdiv{Department of Mathematics}, \orgname{North Carolina State University}, 
\orgaddress{ \state{NC} \postcode{}27695-8205, \country{USA}}}}

\authormark{Ilse C.F. Ipsen}

%\corresp[*]{Corresponding author: \href{ipsen@ncsu.edu}{ipsen@ncsu.edu}}
%\received{Date}{0}{Year}
%\revised{Date}{0}{Year}
%\accepted{Date}{0}{Year}
%\editor{Associate Editor: Name}

\abstract{We consider the solution of full column-rank least squares problems by means of 
normal equations that are preconditioned, symmetrically or non-symmetrically, with a randomized
preconditioner. We present non-intuitive but realistic perturbation bounds for the relative error in the computed solutions
and illustrate that, with an effective preconditioner, these bounds accurately adapt to the size of
the least squares residual. This means,
 the accuracy of the preconditioned normal equations depends on the residual
of the original least squares problem --even though there is officially no least squares problem present.
 Probabilistic condition number bounds corroborate the effectiveness
of the randomized preconditioner computed with small amounts of sampling.
 Numerical experiments illustrate that the perturbation bounds are informative and that the preconditioned
normal equations, when computed with direct methods, can be almost as accurate as those from the QR-based Matlab backslash (mldivide) command -- even for highly illconditioned matrices. }

\keywords{QR decomposition; perturbation bounds; random sampling with replacement; conditioning
with respect to left inversion; least squares residual.
}

%\funding{The work of both authors was supported in part by NSF grant DMS-1760374.
%The second author was also supported in part by NSF grant CCF-2209510, and DOE grant DE-SC0022085.}}}

\maketitle
\bigskip

\section{Introduction}
Given a matrix $\ma\in\rmn$ with $\rank(\ma)=n$, and $\vb\in\real^m$ we consider
the solution of the least squares problem
\begin{equation}\label{e_ls}
\min_{\vx}{\|\ma\vx-\vb\|_2},
\end{equation}
which has the unique solution\footnote{The superscript $\dagger$ denotes the Moore Penrose inverse.}
 $\vx_*\equiv \ma^{\dagger}\vb$. The preferred direct solution
method is a QR or Singular Value Decomposition \cite[Chapter 5]{GovL13}.
Instead, we consider a method based on the normal equations.

The normal equations\footnote{The superscript $T$ denotes the transpose, and the two-norm condition number with respect to left 
inversion is $\kappa(\ma)\equiv \|\ma\|_2\|\ma^{\dagger}\|_2.$}
\begin{equation}\label{e_ne1}
\ma^T\ma\vx=\ma^T\vb
\end{equation}
are usually not recommended due to potential numerical instability
\cite[Section 5.3.7]{GovL13}. Since their condition number
is $\kappa(\ma^T\ma)=\kappa(\ma)^2$,
the normal equations are numerically singular in IEEE double precision 
 once $\kappa(\ma)\geq 10^7$.
 Our proposed remedy is to precondition the normal equations, either on both sides or only on the
 left. We derive perturbation bounds to justify that the preconditioned normal equations can 
 represent numerically
 stable algorithms for solving least squares problems.

\paragraph{Preconditioned Normal Equations.}
We precondition $\ma_p\equiv \ma\mr_s^{-1}$ with an 
effective randomized preconditioner $\mr_s$ so that the preconditioned matrix $\ma_p$ is well conditioned
with high probability, and solve the preconditioned normal equations
\begin{equation*}
\begin{split}
\ma_p^T\ma_p\vy&=\ma_p^T\vb\\
\mr_s\vx&=\vy.
\end{split}
\end{equation*}
We consider a preconditioner to be `effective', if the condition number of the preconditioned 
matrix does not exceed 10, i.e. $\kappa(\ma_p)\leq 10$.

\paragraph{Half Preconditioned Normal Equations.}
The alternative is to dispense with the triangular system solution  by solving
\begin{align*}
\ma_p^T\ma\vx=\ma_p^T\vb,
\end{align*}
which represents an 
instance of the \textit{not-normal equations} \cite[(2.3)]{Wathen2025}.
The matrix $\ma_p^T\ma$ is nonsymmetric, so the linear system 
would have to be solved by LU
with partial pivoting or a QR factorization \cite[Section 2]{Wathen2025}.
If an iterative solver were used, these would correspond to the left preconditioned
CGNE equations \cite[Section 11.3.9.]{GovL13}. 
We do not encounter the 'matrix
squaring problem' \cite[Section 2.1]{Wathen2022}, because we use a preconditioner
for $\ma$ rather than $\ma^T\ma$.

Figure~\ref{fig1} illustrates that,
with an effective preconditioner, the computed solutions for the 
preconditioned and  half-preconditioned normal equations are almost as accurate as the 
QR-based Matlab backslash solution. In particular, they accurately capture the increase in the
least squares residual. This means, the solution accuracy of the preconditioned
normal equations depends on the residual of 
the original 
least squares problem (\ref{e_ls}) --even though there is officially no least squares problem present
or there is not even a corresponding least squares problem.

\subsection{Contributions and Overview}
We present a perturbation theory to show that the normal equations, preconditioned with an effective randomized preconditioner
on one or both sides can be highly accurate, even for ill conditioned matrices.
We present non-intuitive but realistic perturbation bounds for the relative error in the computed solutions
of the preconditioned and half-preconditioned normal equations, and show the following:
\begin{enumerate}
\item Our non-symmetric perturbation bounds for the preconditioned normal equations (Section~\ref{s_pne})
and half-preconditioned normal equations (Section \ref{s_hpne}) are realistic and informative.
In particular, they accurately capture the size of the underlying least squares residual.
\item The accuracy of the solutions depends on the residual
of the original least squares problem -- even though the preconditioned normal equations do not 'know'
about a least squares problem, and even though the half-preconditioned normal equations
do not have an equivalent least squares problem.
\item We justify the choice of perturbations that destroy the symmetry of  the linear system  by illustrating
the shortfall of the ones that do preserve symmetry
 (Appendix~\ref{s_apne}). 
 \item We derive perturbation bounds for right preconditioned Blendenpik-style least squares
 problems (Appendix~\ref{s_apls}).
 However, a comparison with the bounds for the preconditioned
 normal equations is inconclusive. 
 \item Probabilistic condition number bounds demonstrate the effectiveness of randomized preconditioners computed from small amounts of sampling (Section~\ref{s_condno}).
%\item With an effective preconditioner, the perturbation bounds are essentially equal to the
 %perturbation bound for the  original least squares problem (Section~\ref{s_num}).
\item With an effective preconditioner, the solutions from the preconditioned normal equations,
when computed with direct methods,
are almost as accurate
as those from the Matlab backslash (mldivide) command, which, for rectangular 
matrices\footnote{https://www.mathworks.com/help/matlab/ref/double.mldivide.html}
is based on a QR decomposition (Section~\ref{s_num}).
 \end{enumerate}

\begin{figure}
\begin{center}
\resizebox{2.9in}{!}
{\includegraphics{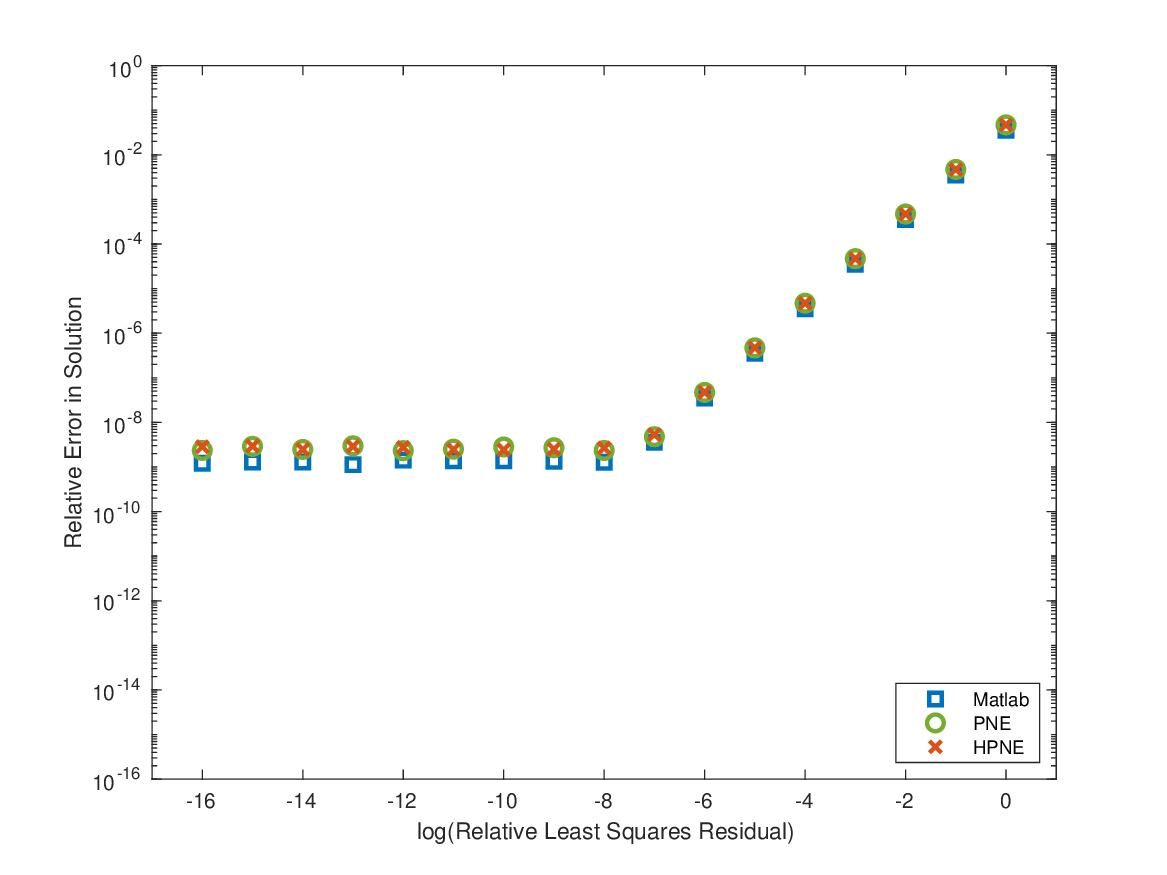}} 
\end{center}
\caption{Relative errors in three different computed solutions $\vhx$ versus logarithm of relative 
least squares residuals $\|\vb-\ma\vx_*\|_2/(\|\ma\|_2\|\vx_*\|_2)$ for 
$\ma\in\real^{6,000\times 1000}$ with condition number $\kappa(\ma)=10^8$,
and preconditioned matrix $\ma_p$ with condition number $\kappa(\ma_p)\approx 4.2$.
The solutions are computed with Matlab backslash (blue squares), preconditioned normal
equations (green circles) and half-preconditioned normal equations (red crosses).}\label{fig1}
\end{figure}

\subsection{Existing Work}
Most existing work on preconditioned normal equations appears to focus
on algorithms rather than conditioning, and in particular
on preconditioners for accelerating the convergence of iterative methods, and
improving their numerical stability via iterative refinement.

A number of papers investigate the solution of nonsingular nonsymmetric systems 
$\ma\vx=\vb$ by solving instead the associated normal equations 
$\ma^T\ma\vx=\ma^T\vb$ via preconditioned iterative methods,
such as CGNE \cite[Section 11.3.9.]{GovL13}, \cite{PT2024}.
Wathen \cite{Wathen2022} gives several examples for the matrix squaring problem: 
if $\mP$ is a good preconditioner for $\ma$, then $\mP^T\mP$ is not
necessarily a good preconditioner for~$\ma^T\ma$. In \cite{Wathen2025} Wathen pursues an 
approach   similar to the half preconditioned normal equations, by  considering nonsymmetric linear systems  $\mb^T\ma=\mb^T\vb$ that are equivalent to the normal
equations, where the columns of $\mb$ represent a
basis for the column space of $\ma$.

Epperly, Greenbaum and Nakatsukasa \cite{Epperly2024,EGN2025} investigate preconditioned 
LSQR combined with iterative refinement.
Lazzarino, Nakatsukasa and Zerbinati
\cite{LNZ2025} consider systems arising from PDE
discretizations, and preconditioned Krylov space methods like
CGNE and LSQR. 
%The preconditioners are one-sided \cite[(2.1)]{LNZ2025}, and the left preconditioner corresponds to our half-preconditioned normal equations, i.e. left preconditioned CGNE method. They consider preconditioers where the preconditioned
%matrix has clustered singular values, rather than clustered eigenvalues. And that this makes sense in the context of the original PDE.
Scott and Tum{\r a}
\cite{ScottTuma2025} consider LSQR preconditioned with incomplete Cholesky factors 
computed
in lower precision.

Carson and Dau{\u z}ickait{\.e} \cite{carson2025} consider the solution of full column-rank least squares problems via iterative
refinement of the semi-normal equations $\mr^T\mr\vx=\ma^T\vb$,  where the residual 
is computed in higher precision than the working accuracy, and observe 
that the semi-normal equations are not sensitive to the size of the least squares residual 
\cite[Section 8]{carson2025}.

\subsection{Notation and Background}
From now on, the Euclidean two-norm is simply denoted by $\|\cdot\|$. For a matrix $\ma\in\rmn$ with 
$\rank(\ma)=n$, the Moore-Penrose inverse is $\ma^{\dagger}=(\ma^T\ma)^{-1}\ma^T$,
the two-norm condition number with respect to left inversion is 
$\kappa(\ma)\equiv\|\ma\|\|\ma^{\dagger}\|$, and the singular values are 
$\sigma_1(\ma)\geq \cdots \geq\sigma_n(\ma)>0$.

To set the context, we give a perturbation bound for the original least squares problem~(\ref{e_ls}).

\begin{lemma}[Fact 5.14 in \cite{IIbook}]\label{l_ls}
Let $\ma, \ma+\me\in\rmn$ with $\rank(\ma)=\rank(\ma+\me)=n$ and 
$\epsilon_A\equiv \|\me\|/\|\ma\|$. Let $\vx_*$ be the solution 
to $\min_{\vx}{\|\ma\vx-\vb\|}$ and $\vhx\neq \vzero$ the solution to $\min_{\vx}{\|(\ma+\me)\vx-\vb\|}$.
Then
\begin{align*}
\frac{\|\vhx-\vx_*\|}{\|\vhx\|}\leq \kappa(\ma)\,\epsilon_A\, 
\left(1+\kappa(\ma)\rho\right), \qquad \text{where}\quad
\rho\equiv \frac{\|\vb-(\ma+\me)\vhx\|}{\|\ma\|\|\vhx\|}.
\end{align*}
\end{lemma}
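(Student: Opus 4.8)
The plan is to work entirely with the normal equations satisfied by the two solutions and to exploit the orthogonality of the \emph{perturbed} residual. First I would record that $\vx_*$ and $\vhx$ satisfy $\ma^T\ma\,\vx_*=\ma^T\vb$ and $(\ma+\me)^T(\ma+\me)\,\vhx=(\ma+\me)^T\vb$, the latter being equivalent to the orthogonality condition $(\ma+\me)^T(\vb-(\ma+\me)\vhx)=\vzero$. The goal is to isolate $\vhx-\vx_*$ from a single identity of the form $\ma^T\ma\,(\vhx-\vx_*)=(\text{first-order terms})$, so that left multiplication by $(\ma^T\ma)^{-1}$ produces the pseudoinverse $\ma^{\dagger}=(\ma^T\ma)^{-1}\ma^T$ and, on the remaining term, the factor $\|(\ma^T\ma)^{-1}\|=\|\ma^{\dagger}\|^2$.

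Concretely, I would compute $\ma^T\ma\,(\vhx-\vx_*)=\ma^T\ma\,\vhx-\ma^T\vb$ and substitute $\vb=(\ma+\me)\vhx+(\vb-(\ma+\me)\vhx)$ into the term $\ma^T\vb$. After the $\ma^T\ma\,\vhx$ terms cancel, this leaves $\ma^T\ma\,(\vhx-\vx_*)=-\ma^T\me\,\vhx-\ma^T(\vb-(\ma+\me)\vhx)$. The one piece of genuine insight is to rewrite $\ma^T(\vb-(\ma+\me)\vhx)$ using the orthogonality condition: since $(\ma+\me)^T(\vb-(\ma+\me)\vhx)=\vzero$, we have $\ma^T(\vb-(\ma+\me)\vhx)=-\me^T(\vb-(\ma+\me)\vhx)$. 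This trades the $\ma^T$ factor for an $\me^T$ factor, which is precisely what makes the residual term first order in $\|\me\|$ rather than $O(1)$, and is the reason the final bound depends on the residual rather than blowing up.

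Substituting back gives $\vhx-\vx_*=-\ma^{\dagger}\me\,\vhx+(\ma^T\ma)^{-1}\me^T(\vb-(\ma+\me)\vhx)$. Taking norms and using $\|\ma^{\dagger}\|=1/\sigma_n(\ma)$ together with $\|(\ma^T\ma)^{-1}\|=\|\ma^{\dagger}\|^2$ yields $\|\vhx-\vx_*\|\le\|\ma^{\dagger}\|\,\|\me\|\,\|\vhx\|+\|\ma^{\dagger}\|^2\|\me\|\,\|\vb-(\ma+\me)\vhx\|$. Finally I would divide by $\|\vhx\|$ and rewrite each term via $\|\ma^{\dagger}\|\,\|\me\|=\kappa(\ma)\,\epsilon_A$ and $\|\ma^{\dagger}\|^2\|\me\|=\kappa(\ma)^2\epsilon_A/\|\ma\|$, which assembles directly into the claimed factored form $\kappa(\ma)\,\epsilon_A\bigl(1+\kappa(\ma)\,\|\vb-(\ma+\me)\vhx\|/(\|\ma\|\,\|\vhx\|)\bigr)$.

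I do not anticipate a serious obstacle: the derivation is short once the right manipulation is found. The only delicate points are organizational. One must route everything through $\ma^T\ma$ rather than $(\ma+\me)^T(\ma+\me)$, so that the condition number appearing in the bound is $\kappa(\ma)$ and the pseudoinverse that emerges is $\ma^{\dagger}$; and one must apply the orthogonality of the perturbed residual to convert $\ma^T$ into $-\me^T$. Making these two choices correctly is exactly what separates this realistic, residual-dependent bound from a cruder $O(\kappa(\ma)^2)$ estimate.
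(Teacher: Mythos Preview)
The paper does not actually prove this lemma; it is stated as Fact~5.14 from \cite{IIbook} and used without proof. Your argument is correct and complete: the key identity $\vhx-\vx_*=-\ma^{\dagger}\me\,\vhx+(\ma^T\ma)^{-1}\me^T(\vb-(\ma+\me)\vhx)$ follows exactly as you describe, and the norm estimates assemble into the stated bound. Moreover, your approach---route everything through $\ma^T\ma$, use orthogonality of the perturbed residual to trade $\ma^T$ for $-\me^T$, multiply by $(\ma^T\ma)^{-1}$, take norms---is precisely the template the paper itself employs in its own perturbation lemmas (compare Lemma~\ref{l_a2} and the proofs of Lemma~\ref{l_5} and Theorem~\ref{t_2}), so there is no methodological divergence to discuss.
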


The condition number of the least squares solution is $\kappa(\ma)\,\max\left\{1,\kappa(\ma)\rho\right\}$,
where $\rho$ represents the relative least squares residual of the 
perturbed problem.
We limit perturbations to those of the matrix $\ma$, and assume an exact right hand side $\vb$, 
because matrix perturbations tend to be much more influential on the sensitivity of least squares problems
than right hand side perturbations.

\section{Perturbation of the preconditioned normal equations}\label{s_pne}
After presenting the assumptions (Section~\ref{s_ass1}), we derive realistic perturbation bounds for the preconditioned normal equations 
(Section~\ref{s_pne1}).

\subsection{Assumptions}\label{s_ass1}
Let $\ma\in\rmn$ have $\rank(\ma)=n$. Let $\mr_s\in\rnn$ be a fixed nonsingular matrix, and
$\ma_p\equiv \ma\mr_s^{-1}$.
The exact preconditioned normal equations are
\begin{equation}\label{e_pne}
\begin{split}
\ma_p^T\ma_p\vy_*&=\ma_p^T\vb\\
\mr_s\vx_*&=\vy_*.
\end{split}
\end{equation}
Since $\ma^T\ma$ and $\mr_s$ are nonsingular, so is the preconditioned matrix $\ma_p^T\ma_p$.

The first step of the preconditioned normal equations (\ref{e_pne}) is mathematically equivalent
to the least squares problem $\min_{\vy}{\|\ma_p\vy-\vb\|_2}$,
which  has the unique solution $\vy_*\equiv \ma_p^{\dagger}\vb$.
Since the right preconditioned
matrix $\ma_p$ has the same column space as $\ma$, the least
squares residual  is equal to that  of the original problem (\ref{e_ls}), 
\begin{align}\label{e_residual}
\vb-\ma_p\vy_*=\vb-\ma\vx_*.
\end{align}

In order to derive realistic perturbation bounds that reflect the numerical errors in Section~\ref{s_num},
we set up the first step with two different perturbations for the 
preconditioned matrix $\ma_p$, so that the resulting linear system is nonsymmetric.
We assume that the triangular system solution in the second step is computed exactly, because
any errors have only a minor, lower order effect.

Let $\me_s\in\rnn$ and $\me_p\in\rmn$,  $\kappa(\mr_s)\epsilon<1$, and
\begin{align}\label{e_perturb}
\ma_1\equiv \ma(\mr_s+\me_s)^{-1},\qquad
\ma_2\equiv \ma_p+\me_p, \qquad 
\epsilon\equiv \max\left\{\tfrac{\|\me_s\|}{\|\mr_s\|}, \tfrac{\|\me_p\|}{\|\ma_p\|}\right\}.
\end{align}
The computed solutions corresponding to (\ref{e_pne}) are modeled as
\begin{eqnarray}
\ma_1^T\ma_2\vhy&=\ma_1^T\vb\label{e_pne1}\\
\mr_s\vhx&=\vhy,\label{e_pne2}
\end{eqnarray}
where $\vhy\neq \vzero$ and $\vhx\neq\vzero$. Remark~\ref{r_2} at the end of the next section
justifies this non-intuitive choice of perturbations.

\subsection{Perturbation bound}\label{s_pne1}
We prove two auxiliary results (Lemmas \ref{l_5} and~\ref{l_4}) for the main statement (Theorem~\ref{t_3}),  
 and give intuition for the
 choice of perturbations (Remark~\ref{r_2}).

The first lemma presents a perturbation bound for the linear system in (\ref{e_pne1}).

\begin{lemma}[Perturbation bound for (\ref{e_pne1})]\label{l_5}
With the assumptions in Section~\ref{s_ass1}, 
\begin{equation*}
\frac{\|\vy_*-\vhy\|}{\|\vhy\|}\leq 
\kappa(\ma_p)\epsilon\,\left(1+\kappa(\ma_p)\, \eta
\left(\frac{\|\vb-\ma_p\vhy\|}{\|\ma_p\|\|\vhy\|}+\epsilon\right)\right),\qquad \text{where}\quad
\eta\equiv \frac{\kappa(\mr_s)}{1-\kappa(\mr_s)\,\epsilon}. 
\end{equation*}
\end{lemma}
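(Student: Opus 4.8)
The plan is to write both $\ma_1$ and $\ma_2$ as additive perturbations of the preconditioned matrix $\ma_p$, and then perturb the exact normal equations $\ma_p^T\ma_p\vy_*=\ma_p^T\vb$ into the computed system~(\ref{e_pne1}). First I would treat the left factor. Using $\ma=\ma_p\mr_s$ and the inverse-difference identity $(\mr_s+\me_s)^{-1}-\mr_s^{-1}=-\mr_s^{-1}\me_s(\mr_s+\me_s)^{-1}$, I obtain $\ma_1-\ma_p=-\ma_p\me_s(\mr_s+\me_s)^{-1}$. Bounding this with $\|\me_s\|\leq\epsilon\|\mr_s\|$ together with the Neumann-series estimate $\|(\mr_s+\me_s)^{-1}\|\leq\|\mr_s^{-1}\|/(1-\kappa(\mr_s)\epsilon)$, which is valid because $\kappa(\mr_s)\epsilon<1$, yields $\|\ma_1-\ma_p\|\leq\eta\|\ma_p\|$ with $\eta$ as defined. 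Setting $\me_1\equiv\ma_1-\ma_p$, I then have the two clean estimates $\|\me_1\|\leq\eta\|\ma_p\|$ and $\|\ma_2-\ma_p\|=\|\me_p\|\leq\epsilon\|\ma_p\|$.

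Next I would substitute $\ma_1=\ma_p+\me_1$ and $\ma_2=\ma_p+\me_p$ into (\ref{e_pne1}), expand, and subtract the exact normal equations to obtain
\begin{equation*}
\ma_p^T\ma_p(\vhy-\vy_*)=\me_1^T(\vb-\ma_p\vhy)-\ma_p^T\me_p\vhy-\me_1^T\me_p\vhy.
\end{equation*}
The crucial algebraic step here is the grouping $\me_1^T\vb-\me_1^T\ma_p\vhy=\me_1^T(\vb-\ma_p\vhy)$, which makes the least-squares residual at $\vhy$ appear and is the source of the residual dependence in the final bound. Multiplying by $(\ma_p^T\ma_p)^{-1}$ and using $(\ma_p^T\ma_p)^{-1}\ma_p^T=\ma_p^{\dagger}$ renders the middle contribution as the forward-perturbation term $\ma_p^{\dagger}\me_p\vhy$.

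Finally I would take norms term by term, using $\|(\ma_p^T\ma_p)^{-1}\|=\|\ma_p^{\dagger}\|^2=\kappa(\ma_p)^2/\|\ma_p\|^2$ and $\|\ma_p^{\dagger}\|=\kappa(\ma_p)/\|\ma_p\|$, together with the two perturbation estimates above. The residual term contributes $\kappa(\ma_p)^2\eta\,\|\vb-\ma_p\vhy\|/\|\ma_p\|$, the term $\ma_p^{\dagger}\me_p\vhy$ contributes $\kappa(\ma_p)\epsilon\|\vhy\|$, and the second-order term contributes $\kappa(\ma_p)^2\eta\epsilon\|\vhy\|$; dividing by $\|\vhy\|$ and collecting the two $\kappa(\ma_p)^2\eta$ pieces produces exactly the stated bound. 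I expect the main obstacle to be conceptual rather than computational: recognizing that the left perturbation $\me_s$ must enter through $\eta$, which carries the extra factor $\kappa(\mr_s)$, rather than through $\epsilon$, and that only this left-factor perturbation pairs with $\vb$ to form the residual, whereas $\me_p$ remains a pure forward-perturbation term. This asymmetry between the two perturbations is precisely what ties the accuracy to the original least-squares residual and distinguishes the bound from a naive symmetric estimate.
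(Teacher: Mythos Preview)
Your proposal is correct and essentially identical to the paper's proof. The only cosmetic difference is that the paper factors the left perturbation multiplicatively as $\ma_1=\ma_p(\mi-\mf_p)$ with $\mf_p=(\mi+\me_s\mr_s^{-1})^{-1}\me_s\mr_s^{-1}$, whereas you write it additively as $\ma_1=\ma_p+\me_1$; since $\me_1=-\ma_p\mf_p$, the error expression, the residual grouping, and the final norm estimates coincide term by term.
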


\begin{proof}$\ $
Write
\begin{equation*}
\ma_1=\ma(\mr_s+\me_s)^{-1}=\underbrace{\ma\mr_s^{-1}}_{\ma_p}
(\mi+\underbrace{\me_s\mr_s^{-1}}_{\mf})^{-1}=
\ma_p(\mi-\underbrace{(\mi+\mf)^{-1}\mf}_{\mf_p})
=\ma_p(\mi-\mf_p),
\end{equation*}
where 
$\mf\equiv \me_s\mr_s^{-1}$ and $\mf_p\equiv (\mi+\mf)^{-1}\mf$.
Then (\ref{e_pne1}) can be written as 
\begin{equation*}
\ma_1^T(\vb-\ma_p\vhy)=\ma_1^T\me_p\vhy
\end{equation*}
With $\ma_1=\ma_p(\mi-\mf_p)$ this gives
\begin{equation*}
(\mi-\mf_p)^T\ma_p^T(\vb-\ma_p\vhy)=(\mi-\mf_p)^T\ma_p^T\me_p\vhy.
\end{equation*}
Rearrange, 
\begin{align*}
\ma_p^T(\vb-\ma_p\vhy)
&=\mf_p^T\ma_p^T(\vb-\ma_p\vhy)+(\mi-\mf_p)^T\ma_p^T\me_p\vhy\\
&=\mf_p^T\ma_p^T(\vb-\ma_p\vhy)+\ma_p^T\me_p\vhy-\mf_p^T\ma_p^T\me_p\vhy,
\end{align*}
and multiply by $(\ma_p^T\ma_p)^{-1}$, 
\begin{equation*}
\vy_*-\vhy=\ma_p^{\dagger}\me_p\vhy+
(\ma_p^T\ma_p)^{-1}\left(\mf_p^T\ma_p^T(\vb-\ma_p\vhy)
-\mf_p^T\ma_p^T\me_p\vhy\right).
\end{equation*}
Take norms and use the fact that $\kappa(\ma_p^T\ma_p)=\kappa(\ma_p)^2$,
\begin{align*}
\frac{\|\vy_*-\vhy\|}{\|\vhy\|}&
\leq \kappa(\ma_p)\frac{\|\me_p\|}{\|\ma_p\|}+
\kappa(\ma_p)^2\,\|\mf_p\|\,\left(\frac{\|\vb-\ma_p\vhy\|}{\|\ma_p\|\|\vhy\|} +
\frac{\|\me_p\|}{\|\ma_p\|}\right)\\
&\leq \kappa(\ma_p)\,\epsilon+
\kappa(\ma_p)^2\,\|\mf_p\|\,\left(\frac{\|\vb-\ma_p\vhy\|}{\|\ma_p\|\|\vhy\|} +
\epsilon\right).
\end{align*}
At last bound
\begin{equation*}
\|\mf_p\|\leq \frac{\|\mf\|}{1-\|\mf\|}\leq 
\frac{\|\me_s\| \|\mr_s^{-1}\|}{1-\|\me_s\| \|\mr_s^{-1}\|}
\leq \frac{\kappa(\mr_s)\epsilon}{1-\kappa(\mr_s)\epsilon}=\eta\,\epsilon.
\end{equation*}
\end{proof}

The second lemma presents a perturbation bound for the linear system in (\ref{e_pne2}). 
The bound does not exploit the possible triangular structure of $\mr_s$.

\begin{lemma}[Perturbation bound for (\ref{e_pne2})]\label{l_4}
With the assumptions in Section~\ref{s_ass1}, 
\begin{equation*}
\frac{\|\vx_*-\vhx\|}{\|\vhx\|}\leq \kappa(\mr_s)\,\nu\,\frac{\|\vy_*-\vhy\|}{\|\vhy\|}
\qquad \text{where}\qquad \nu\equiv \frac{\|\mr_s\vhx\|}{\|\mr_s\|\|\vhx\|}\leq 1.
\end{equation*}
\end{lemma}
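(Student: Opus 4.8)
The plan is to exploit the fact that both the exact and the computed second steps are triangular solves with the \emph{same} fixed matrix $\mr_s$: equation (\ref{e_pne}) gives $\mr_s\vx_*=\vy_*$ and (\ref{e_pne2}) gives $\mr_s\vhx=\vhy$. Subtracting these two relations immediately yields $\mr_s(\vx_*-\vhx)=\vy_*-\vhy$, and since $\mr_s$ is nonsingular, $\vx_*-\vhx=\mr_s^{-1}(\vy_*-\vhy)$. Notice that this lemma uses neither the perturbations $\me_s,\me_p$ nor the bound $\kappa(\mr_s)\epsilon<1$; it is purely a statement about transporting an error through a fixed linear map, so no perturbation theory enters.

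First I would take norms and apply submultiplicativity to obtain $\|\vx_*-\vhx\|\leq\|\mr_s^{-1}\|\,\|\vy_*-\vhy\|$. The only real work is bookkeeping: the target bound is phrased in terms of the \emph{relative} error $\|\vy_*-\vhy\|/\|\vhy\|$ supplied by Lemma~\ref{l_5} and the \emph{computed} norm $\|\vhx\|$, so I must convert between $\|\vhy\|$ and $\|\vhx\|$. Dividing by $\|\vhx\|$ (legitimate since $\vhx\neq\vzero$) and inserting the factor $\|\vhy\|/\|\vhy\|$ gives
\begin{equation*}
\frac{\|\vx_*-\vhx\|}{\|\vhx\|}\leq \|\mr_s^{-1}\|\,\frac{\|\vhy\|}{\|\vhx\|}\,\frac{\|\vy_*-\vhy\|}{\|\vhy\|}.
\end{equation*}
The pivotal substitution is then $\vhy=\mr_s\vhx$, which rewrites the middle factor as $\|\mr_s\vhx\|/\|\vhx\|$.

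The final step is to recognize $\|\mr_s^{-1}\|\,\|\mr_s\vhx\|/\|\vhx\|$ as $\kappa(\mr_s)\,\nu$: multiplying and dividing by $\|\mr_s\|$ reconstitutes $\kappa(\mr_s)=\|\mr_s\|\|\mr_s^{-1}\|$ and leaves exactly the ratio defining $\nu$. The claimed bound $\nu\leq 1$ is then immediate from submultiplicativity, $\|\mr_s\vhx\|\leq\|\mr_s\|\|\vhx\|$. There is no genuine obstacle here; the subtlety worth flagging is merely the deliberate decision to express everything through the computed quantities $\vhx$ and $\vhy$ rather than the exact $\vx_*$ and $\vy_*$. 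This is what makes $\nu$ appear and measure how much $\mr_s$ stretches the particular direction $\vhx$ relative to its operator norm, and it is precisely this normalization that lets the estimate chain cleanly with Lemma~\ref{l_5} to produce the residual-aware bound of Theorem~\ref{t_3}.
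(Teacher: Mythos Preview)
Your proof is correct and follows essentially the same route as the paper: subtract the two triangular systems to get $\vx_*-\vhx=\mr_s^{-1}(\vy_*-\vhy)$, take norms, divide by $\|\vhx\|$, insert $\|\vhy\|/\|\vhy\|$, and use $\vhy=\mr_s\vhx$ together with $\kappa(\mr_s)=\|\mr_s\|\|\mr_s^{-1}\|$ to identify $\kappa(\mr_s)\,\nu$. The paper's argument is identical, just more terse.
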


\begin{proof}$\ $
From $\vx_*-\vhx=\mr_s^{-1}(\vy_*-\vhy)$ follows
\begin{equation*}
\frac{\|\vx_*-\vhx\|}{\|\vhx\|}\leq \|\mr_s^{-1}\|\,\frac{\|\vy_*-\vhy\|}{\|\vhx\|}
=\kappa(\mr_s)\, \underbrace{\frac{\|\vhy\|}{\|\mr_s\|\|\vhx\|}}_{\nu}
\,\frac{\|\vy_*-\vhy\|}{\|\vhy\|}.
\end{equation*}
From $\|\vhy\|=\|\mr_s\vhx\|\leq \|\mr_s\|\|\vhx\|$ follows $\nu\leq 1$.
\end{proof}

The main result is a perturbation bound for the preconditioned normal equations (\ref{e_pne}),
based on Lemmas \ref{l_5} and~\ref{l_4}.

\begin{theorem}\label{t_3}
With the assumptions in Section~\ref{s_ass1},
\begin{align*}
\frac{\|\vx_*-\vhx\|}{\|\vhx\|}\leq \kappa(\mr_s)\,\nu\, \kappa(\ma_p)\,\epsilon
\left(1+\kappa(\ma_p)\, \eta
\left(\frac{\|\vb-\ma_p\vhy\|}{\|\ma_p\|\|\vhy\|}+\epsilon\right)\right),
\end{align*}
where 
\begin{align*}
\nu\equiv \frac{\|\mr_s\vhx\|}{\|\mr_s\| \|\vhx\|}\leq 1\qquad \text{and}\qquad
\eta\equiv \frac{\kappa(\mr_s)}{1-\kappa(\mr_s)\,\epsilon}. 
\end{align*}
\end{theorem}

\begin{proof}$\ $
Substitute the bound from Lemma~\ref{l_5} below into Lemma~\ref{l_4}.
\end{proof}

Theorem~\ref{t_3} shows that the solution accuracy of the preconditioned normal equations 
(\ref{e_pne}) depends
on the least squares residual of the original least squares problem (\ref{e_ls}) -- even though the preconditioned normal equations do not 'know' about the least squares problem.
Appendix~\ref{s_ne}
shows that this dependence is also present in the ordinary normal equations.

Theorem~\ref{t_3} implies that, to first order, the  relative error in $\vhx$ is bounded by
%\begin{align*}
%\frac{\|\vx_*-\vhx\|}{\|\vhx\|}\lesssim \kappa(\ma_p)\, \kappa(\mr_s)\,\epsilon
%\left(1+\kappa(\ma_p)\,\kappa(\mr_s)\,
%\frac{\|\vb-\ma_p\vhy\|}{\|\ma_p\|\|\vhy\|}\right).
%\end{align*}
%That is
\begin{align}\label{e_pne1a}
\frac{\|\vx_*-\vhx\|}{\|\vhx\|}\lesssim \kappa(\ma_p)\,\kappa(\mr_s)\,\epsilon\,\max\left\{
1,\>\kappa(\ma_p)\kappa(\mr_s)
\frac{\|\vb-\ma_p\vhy\|}{\|\ma_p\|\|\vhy\|}\right\}.
\end{align}
That is, if the least squares residual is sufficiently small, so that
\begin{align*}
\kappa(\ma_p)\,\kappa(\mr_s) \frac{\|\vb-\ma_p\vhy\|}{\|\ma_p\|\|\vhy\|}\leq 1,
\end{align*}
then the relative error in $\vhx$ is dominated by $\kappa(\ma_p)\kappa(\mr_s)\,\epsilon$.
Otherwise, the relative error in $\vhx$ is proportional to  the least 
squares residual.

If the preconditioner~$\mr_s$ is effective, so that $\kappa(\ma_p)\leq 10$,
 then the preconditioned normal equations~(\ref{e_pne})
are a numerically stable algorithm for solving least squares problems, because the bound in Theorem~\ref{t_3}
resembles the perturbation bound of the original 
least squares problem in Lemma~\ref{l_ls}.

Why? An effective  preconditioner produces $\kappa(\ma_p)\approx 1$ and
$\kappa(\mr_s)\approx\kappa(\ma)$, so that $\kappa(\ma_p)\kappa(\mr_s)\approx \kappa(\ma)$.
If the relative least squares residuals in (\ref{e_pne1a}) and Lemma~\ref{l_ls}
are about the same, so that
\begin{equation*}
\rho=\frac{\|\vb-\ma_p\vhy\|}{\|\ma_p\|\|\vhy\|}\approx \frac{\|\vb-(\ma+\me)\vhx\|}{\|\ma\|\|\vhx\|},
\end{equation*}
then the condition number in (\ref{e_pne1a}) is about the same as 
that in Lemma~\ref{l_ls},
\begin{align}\label{e_r2a}
\kappa(\ma_p)\kappa(\mr_s)\,\max\left\{1,\,\kappa(\ma_p)\kappa(\mr_s)\rho\right\}
\approx \kappa(\ma)\,\max\left\{1,\kappa(\ma)\rho\right\}.
\end{align}
Furthermore, (\ref{e_residual}) implies that the exact 
least squares residuals of the original and preconditioned problem are the same.
Hence~(\ref{e_r2a}) has the same form as the bound in Lemma~\ref{l_ls}.
The
numerical experiments in Section~\ref{s_num2} illustrate that Theorem~\ref{t_3} is 
informative and realistic.

\begin{remark}\label{r_2}
Why does Theorem~\ref{t_3} have to resort to perturbations that destroy the symmetry of the 
linear system?
It is because perturbations that preserve the symmetry lead to unrealistic condition numbers, 
as illustrated in
Appendix~\ref{s_apne}. Here are the details.

\begin{enumerate}
\item Intuitively we would just apply the perturbation bound for the normal equations in  
Lemma~\ref{l_a2}
to the preconditioned normal equations (\ref{e_pne1}), and then
account for the triangular system solution via Lemma~\ref{l_4}.

This is precisely what Lemma~\ref{l_a3} does: It perturbs all instances of $\ma_p$ by the
same matrix $\me_p$, which leads to the condition number 
\begin{align*}
\kappa(\ma_p)^2\kappa(\mr_s)\max\{1, \rho\},
\end{align*} 
where $\rho $ is a relative least squares residual.
The condition number in Lemma~\ref{l_ls}, and numerical experiments
 indicate that this is too optimistic.

\item Lemma~\ref{l_a6} shows that perturbing all instances of $\mr_s$ 
by the same matrix~$\me_s$ leads to a condition number 
$\kappa(\ma_p)^2\kappa(\mr_s)^2\max\{1, \rho\}$, where $\rho $ is a relative least squares residual.
A comparison with~(\ref{e_r2a}) and numerical experiments  illustrate that this is too pessimistic.
\end{enumerate}
\end{remark}

Perturbation bounds for right-preconditioned least squares solvers,
in the spirit of Blendenpik \cite[Algorithm 1]{Blendenpik} are derived in
Appendix~\ref{s_apls}. However, a comparison with Theorem~\ref{t_3} is inconclusive.

\section{Perturbation of half preconditioned normal equations}\label{s_hpne}
We derive realistic perturbation bounds for the half preconditioned normal equations, under the
following assumptions.

Let $\ma\in\rmn$ have $\rank(\ma)=n$. Let $\mr_s\in\rnn$ be a fixed
nonsingular matrix and $\ma_p\equiv \ma\mr_s^{-1}$.
The exact half preconditioned normal equations are 
\begin{align}\label{e_hp}
\ma_p^T\ma\vx_*=\ma_p^T\vb.
\end{align}
Since $\ma^T\ma$ and $\mr_s$ are nonsingular, so is the half preconditioned
matrix $\ma_p^T\ma$.

The subsequent Remark~\ref{r_4} justifies the non-intuitive choice of perturbations
in Theorem~\ref{t_2}.

\begin{theorem}\label{t_2}Let  $\me_s\in\rnn$, $\me_A\in\rmn$, 
\begin{align*}
\ma_1\equiv \ma(\mr_s+\me_s)^{-1},\qquad \ma_2\equiv \ma+\me_A, \qquad
\epsilon\equiv \max\left\{\frac{\|\me_s\|}{\|\mr_s\|}, \frac{\|\me_A\|}{\|\ma\|}\right\},
\end{align*}
$\kappa(\mr_s)\epsilon<1$, and
\begin{align}
\ma_1^T\ma_2\vhx=\ma_1^T\vb.\label{e_t21}
\end{align}
If  $\vhx\neq \vzero$, then
\begin{align*}
\frac{\|\vx_*-\vhx\|}{\|\vhx\|}\leq \kappa(\ma_p^T\ma)\, \nu\, \epsilon\,
\left(1+\eta\frac{\|\vb-\ma\vhx\|}{\|\ma\|\|\vhx\|}+\eta\epsilon\right),
\end{align*}
where 
\begin{align*}
\nu\equiv \frac{\|\ma_p\|\|\ma\|}{\|\ma_p^T\ma\|}\geq 1,\qquad 
\eta\equiv \frac{\kappa(\mr_s)}{1-\kappa(\mr_s)\epsilon}.
\end{align*}
\end{theorem}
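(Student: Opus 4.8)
The plan is to mirror the proof of Lemma~\ref{l_5}, adapting it to the nonsymmetric coefficient matrix $\ma_p^T\ma$. First I would apply the same factorization of the perturbed preconditioned matrix, writing
\[
\ma_1 = \ma(\mr_s+\me_s)^{-1} = \ma_p(\mi-\mf_p), \qquad \mf\equiv\me_s\mr_s^{-1},\quad \mf_p\equiv(\mi+\mf)^{-1}\mf,
\]
which isolates the perturbation $\me_s$ of the preconditioner into the single factor $\mf_p$ whose norm is already controlled by $\|\mf_p\|\leq\eta$, exactly as at the end of the proof of Lemma~\ref{l_5}.

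Next I would rewrite the perturbed equation (\ref{e_t21}). Since $\ma_2=\ma+\me_A$, the identity $\ma_1^T\ma_2\vhx=\ma_1^T\vb$ becomes $\ma_1^T(\vb-\ma\vhx)=\ma_1^T\me_A\vhx$. Substituting $\ma_1=\ma_p(\mi-\mf_p)$ and rearranging to bring the unperturbed operator $\ma_p^T$ to the front gives
\[
\ma_p^T(\vb-\ma\vhx)=\mf_p^T\ma_p^T(\vb-\ma\vhx)+\ma_p^T\me_A\vhx-\mf_p^T\ma_p^T\me_A\vhx.
\]
The crucial structural difference from Lemma~\ref{l_5} is the treatment of the left-hand side. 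Here the exact equation (\ref{e_hp}) is $\ma_p^T\ma\vx_*=\ma_p^T\vb$, so $\ma_p^T(\vb-\ma\vx_*)=\vzero$ and therefore $\ma_p^T(\vb-\ma\vhx)=\ma_p^T\ma(\vx_*-\vhx)$. This lets me solve for the error by multiplying through by $(\ma_p^T\ma)^{-1}$, in contrast to the symmetric $(\ma_p^T\ma_p)^{-1}$ of the preconditioned case, and it is precisely why the nonsymmetric condition number $\kappa(\ma_p^T\ma)$ replaces $\kappa(\ma_p)^2$.

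I would then take norms, pull out the common factor $\|\ma_p\|\,\|\ma\|$ from each of the three right-hand terms, and recognize $\kappa(\ma_p^T\ma)\,\|\ma_p\|\|\ma\|/\|\ma_p^T\ma\|=\kappa(\ma_p^T\ma)\,\nu$. Bounding $\|\mf_p\|\leq\eta$ on the residual term and $\|\me_A\|/\|\ma\|\leq\epsilon$ on the remaining two terms then delivers the stated bound, with the first term carrying $\eta$ times the residual $\|\vb-\ma\vhx\|/(\|\ma\|\|\vhx\|)$, and the two perturbation terms collecting $(1+\eta)\epsilon$. The main obstacle is bookkeeping rather than any conceptual difficulty: keeping the three terms correctly associated with $\|\ma_p\|$, $\|\ma\|$, $\|\mf_p\|$, and $\|\me_A\|$ so that the factor $\nu=\|\ma_p\|\|\ma\|/\|\ma_p^T\ma\|$ emerges cleanly and the residual separates from the $\epsilon$ contributions. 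One point worth checking is the direction $\nu\geq1$: submultiplicativity gives $\|\ma_p^T\ma\|\leq\|\ma_p\|\|\ma\|$, so this factor is benign but, unlike its counterpart in Lemma~\ref{l_4} where $\nu\leq1$, it can only enlarge the bound.
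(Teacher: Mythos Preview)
Your proposal is correct and follows essentially the same route as the paper: factor $\ma_1=\ma_p(\mi-\mf_p)$, rewrite (\ref{e_t21}) as $\ma_1^T(\vb-\ma\vhx)=\ma_1^T\me_A\vhx$, isolate $\ma_p^T(\vb-\ma\vhx)=\ma_p^T\ma(\vx_*-\vhx)$, invert $\ma_p^T\ma$, take norms, and bound $\|\mf_p\|\le\eta$. Your added remarks on why $(\ma_p^T\ma)^{-1}$ replaces $(\ma_p^T\ma_p)^{-1}$ and why $\nu\ge 1$ are accurate and match what the paper uses implicitly.
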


\begin{proof}$\ $
Write
\begin{equation*}
\ma_1=\ma(\mr_s+\me_s)^{-1}=\underbrace{\ma\mr_s^{-1}}_{\ma_p}
(\mi+\underbrace{\me_s\mr_s^{-1}}_{\mf})^{-1}=
\ma_p(\mi-\underbrace{(\mi+\mf)^{-1}\mf}_{\mf_p})
=\ma_p(\mi-\mf_p),
\end{equation*}
where 
$\mf\equiv \me_s\mr_s^{-1}$ and $\mf_p\equiv (\mi+\mf)^{-1}\mf$.
Then (\ref{e_t21}) can be written as 
\begin{equation*}
\ma_1^T(\vb-\ma\vhx)=\ma_1^T\me_A\vhx.
\end{equation*}
With $\ma_1=\ma_p(\mi-\mf_p)$, this gives
\begin{equation*}
(\mi-\mf_p)^T\ma_p^T(\vb-\ma\vhx)=(\mi-\mf_p)^T\ma_p^T\me_A\vhx.
\end{equation*}
Rearrange
\begin{equation*}
\ma_p^T(\vb-\ma\vhx)=\mf_p^T\ma_p^T(\vb-\ma\vhx)+(\mi-\mf_p)^T\ma_p^T\me_A\vhx
\end{equation*}
and multiply by $(\ma_p^T\ma)^{-1}$,
\begin{equation*}
\vx_*-\vhx=(\ma_p^T\ma)^{-1}\left(\mf_p^T\ma_p^T(\vb-\ma\vhx)
+(\mi-\mf_p)^T\ma_p^T\me_A\vhx\right).
\end{equation*}
Take norms
\begin{equation*}
\frac{\|\vx_*-\vhx\|}{\|\vhx\|}\leq \kappa(\ma_p^T\ma)\,
\underbrace{\frac{\|\ma_p\|\|\ma\|}{\|\ma_p^T\ma\|}}_{\nu}\,
\left(\|\mf_p\|\,\frac{\|\vb-\ma\vhx\|}{\|\ma\|\|\vhx\|} +
(1+\|\mf_p\|)\underbrace{\frac{\|\me_A\|}{\|\ma\|}}_{\leq \epsilon}\right),
\end{equation*}
and bound
\begin{equation*}
\|\mf_p\|\leq \frac{\|\mf\|}{1-\|\mf\|}\leq 
\frac{\|\me_s\| \|\mr_s^{-1}\|}{1-\|\me_s\| \|\mr_s^{-1}\|}
\leq \frac{\kappa(\mr_s)\epsilon}{1-\kappa(\mr_s)\epsilon}=\eta\epsilon.
\end{equation*}
\end{proof}

Theorem~\ref{t_2} implies that the solution accuracy of the half preconditioned normal equations
(\ref{e_hp}) depends on the least squares residual of the original least squares problem (\ref{e_ls})
-- even though (\ref{e_hp}) are not mathematically equivalent to a least squares problem.

Theorem~\ref{t_2} implies that, to first order, the relative error in $\vhx$ is bounded by
\begin{equation}\label{e_pne2a}
\frac{\|\vx_*-\vhx\|}{\|\vhx\|}\lesssim  \kappa(\ma_p^T\ma)\, \nu\, \epsilon
\left(1+\kappa(\mr_s)\frac{\|\vb-\ma\vhx\|}{\|\ma\|\|\vhx\|}\right).
\end{equation}
If the preconditioner~$\mr_s$ is effective, so that $\kappa(\ma_p)\leq 10$, then the half preconditioned normal 
equations~(\ref{e_hp}) are a numerically stable algorithm for solving the least squares
problem, because the bounds in Theorem~\ref{t_2}
and~(\ref{e_pne2a}) resemble the perturbation bound of the original 
least squares problem in Lemma~\ref{l_ls}.

Why?  The singular values of $\ma_p^T\ma$ are bounded by
\begin{align*}
\sigma_n(\ma_p)\sigma_j(\ma)\leq \sigma_j(\ma_p^T\ma)\leq \sigma_1(\ma_p)\sigma_j(\ma),
\qquad 1\leq j\leq n.
\end{align*}
Thus $\kappa_2(\ma_p^T\ma)\leq \kappa_2(\ma_p)\kappa_2(\ma)$.
An effective preconditioner produces $\kappa(\ma_p)\approx 1$ and
$\kappa(\mr_s)\approx\kappa(\ma)$, so that $\kappa(\ma_p^T\ma)\approx \kappa(\ma)$.
Furthermore
$\|\ma_p\|_2\approx 1$ and $\|\ma_p^T\ma\|_2\approx \|\ma\|_2$
implies $\nu\approx 1$. 
If the relative least squares residuals in (\ref{e_pne2a}) and Lemma~\ref{l_ls}
are about the same, so that
\begin{equation*}
\rho=\frac{\|\vb-\ma\vhx\|}{\|\ma\|\|\vhx\|}\approx \frac{\|\vb-(\ma+\me)\vhx\|}{\|\ma\|\|\vhx\|},
\end{equation*}
then the condition number in (\ref{e_pne2a}) is about the same as 
that in Lemma~\ref{l_ls},
\begin{align}\label{e_r3a}
\kappa(\ma_p^T\ma)\nu\max\left\{1,\, \kappa(\mr_s)\rho\right\}\approx 
\kappa(\ma)\,\max\left\{1,\kappa(\ma)\rho\right\}.
\end{align}
The
numerical experiments in Section~\ref{s_num3} illustrate that Theorem~\ref{t_2} is 
informative and realistic, with $\nu\leq 2$ for the randomized preconditioner.

In the special case
 $\ma_p=\ma$ for the normal equations (\ref{e_ne1}), we have $\nu=1$
 and Theorem~\ref{t_2} essentially reduces to Lemma~\ref{l_a2}.

\begin{remark}\label{r_4}
Why does Theorem~\ref{t_2}  have to resort to different perturbations for $\ma$?
It is because the same perturbations lead to unrealistic condition numbers.

Lemma~\ref{l_a7} shows that perturbing $\ma_p$ and $\ma$
leads to a condition number 
$\kappa(\ma_p^T\ma)\nu\max\{1, \rho\}$, where $\rho $ is a relative least squares residual.
Lemma~\ref{l_ls} and the numerical experiments indicate that this is too optimistic.
\end{remark}
%Our experiments showed $\frac{\|\vb-\ma\vhx\|}{\|\ma\|\vhx\|}\geq \epsilon$ for
%$\epsilon\approx 10^{-16}$.

\section{Probabilistic Condition Number Bounds}\label{s_condno}
We review the randomized sampling approach for the randomized preconditioner (Section~\ref{s_sample}) and derive condition number bounds for the preconditioner
and the preconditioned matrices (Section~\ref{s_precond}).

\subsection{Randomized Preconditioner}\label{s_sample}
The randomized preconditioner, motivated by the least squares solver 
\textit{Blendenpik} \cite{Blendenpik},
is computed with the pseudocode in Algorithm~\ref{alg_precond}.

 Given $\ma\in\rmn$ with $\rank(\ma)=n$, we produce a smaller dimensional matrix 
 by sampling $c$ rows from the smoothed matrix 
$\ \ma_s\equiv\ms\,\mathcal{F}\,\ma$ uniformly and with replacement. 
The matrix $\mathcal{F}=\mf\md\in\rmm$ is a random orthogonal matrix,  where
$\mf$ is a discrete cosine transform (DCT-2),
\begin{equation*}
\mf_{ij}= \sqrt{\frac{2}{m}}\>\cos\left(\frac{\pi}{2m}(2j-1)(i-1)\right) \qquad
1\leq i,j\leq m
\end{equation*}
and $\md$ is random diagonal matrix whose diagonal elements are
$\md_{jj}=\pm 1$ with probability 1/2, $1\leq j\leq m$. 
The matrix $\ms\in\real^{c\times m}$ samples $c$ rows $k_1,\ldots k_c$
from the identity $\mi_m$, uniformly and with replacement,
\begin{equation*}
\mi_m=\begin{bmatrix} 1& & \\ &\ddots &\\ & &1\end{bmatrix}= 
\begin{bmatrix} \ve_1^T \\ \vdots \\ \ve_m^T\end{bmatrix}\in\rmm \qquad\quad
\ms\equiv \sqrt{\tfrac{m}{c}}\begin{bmatrix} \ve_{k_1}^T \\ \vdots \\ \ve_{k_c}^T\end{bmatrix}
\in\real^{c\times m}.
\end{equation*}
In expectation we have $\E[\ms^T\ms]=\mi_m$.
The purpose of the row mixing matrix $\mathcal{F}$
is to improve the coherence, so that the subsequent uniform sampling via $\ms$ is effective.

\begin{algorithm}
\caption{Computation of the randomized preconditioner}\label{alg_precond}
\begin{algorithmic}
\Require Given $\ma\in\rmn$ with $\rank(\ma)=n$, sampling amount $c\geq n$
\State Sample $c$ rows from smoothed matrix: $\ma_s\equiv\ms\mathcal{F}\ma$
\State Compute preconditioner $\mr_s\in\rnn$ from thin QR decomposition $\ma_s=\mq_s\,\mr_s$
\State Precondition the matrix: $\ma_p\equiv \ma\,\mr_s^{-1}$
\end{algorithmic}
\end{algorithm}
\subsection{Condition Number Bounds}\label{s_precond}
After presenting the assumptions (Assumptions~\ref{ass2}) and an auxiliary deterministic result (Lemma~\ref{l_prob}), we present probabilistic  bounds for the singular values
of the preconditioned matrix (Theorem~\ref{t_prob}), followed by probabilistic bounds on the 
condition numbers of the preconditioner and the preconditioned matrices (Theorem~\ref{t_prob2}).

\begin{assumptions}\label{ass2}
 Let $\ma\in\rmn$ with $\rank(\ma)=n$ and thin QR factorization $\ma=\mq\mr$ where $\mq\in\rmn$
with $\mq^T\mq=\mi_n$. Let $\ms\in\real^{c\times n}$  sample $c$ rows uniformly, independently, and with replacement.
Let $\mathcal{F}\in\rmm$ be a random orthogonal matrix,  and let  $\mathcal{F}\mq$ 
have coherence $\mu\equiv \max_{1\leq i\leq m}{\|\ve_i^T\mathcal{F}\mq\|_2^2}$.
Let the sampled matrix be $\ma_s\equiv \ms\mathcal{F}\mq$.
\end{assumptions}

 We express the singular values of the preconditioned matrix $\ma_p$  
 in terms of the singular values of~$\ms\mathcal{F}\mq$.

 \begin{lemma}[Lemma 4.1 in \cite{GI24}]\label{l_prob} 
 Under Assumptions~\ref{ass2}, if also  $\rank(\ma_s)=n$ in Algorithm~\ref{alg_precond}, then
 \begin{equation*}
\sigma_i(\ms\mathcal{F}\mq)=1/\sigma_{n-i+1}(\ma_p), \qquad 1\leq i\leq n,
\end{equation*}
and $\kappa(\ms\mathcal{F}\mq)=\kappa(\ma_p)$.
\end{lemma}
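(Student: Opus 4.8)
The plan is to relate the singular values of the preconditioned matrix $\ma_p$ directly to those of the sampled matrix $\ms\mathcal{F}\mq$ by tracking how the QR-based preconditioner $\mr_s$ acts on $\ma$. First I would record the key factorizations: under Assumptions~\ref{ass2} we have $\ma=\mq\mr$ with $\mq^T\mq=\mi_n$, and the preconditioner comes from the thin QR decomposition of the \emph{sampled} smoothed matrix. Since $\ma_s\equiv\ms\mathcal{F}\ma=\ms\mathcal{F}\mq\mr$, and assuming $\rank(\ma_s)=n$, the thin QR factorization $\ms\mathcal{F}\mq\mr=\mq_s\mr_s$ produces the preconditioner $\mr_s$. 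The goal is then to show $\ma_p\equiv\ma\mr_s^{-1}$ has singular values reciprocal to those of $\ms\mathcal{F}\mq$.

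Next I would substitute the factorizations into $\ma_p$ to expose its structure. Writing $\ma_p=\ma\mr_s^{-1}=\mq\mr\mr_s^{-1}$, the central observation is that $\mr\mr_s^{-1}$ is, up to an orthonormal factor, the inverse of $\ms\mathcal{F}\mq$. Concretely, from $\ms\mathcal{F}\mq\mr=\mq_s\mr_s$ one gets $\ms\mathcal{F}\mq=\mq_s\mr_s\mr^{-1}$, hence $(\ms\mathcal{F}\mq)^{-1}$-type manipulations (interpreted via the thin factorization, since $\ms\mathcal{F}\mq\in\real^{c\times n}$ has full column rank) give $\mr\mr_s^{-1}=(\ms\mathcal{F}\mq)^{\dagger}\mq_s$ or an equivalent relation tying $\mr\mr_s^{-1}$ to a left-inverse of $\ms\mathcal{F}\mq$ times an orthonormal matrix. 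Because $\mq$ has orthonormal columns, left-multiplication by $\mq$ preserves singular values, so the singular values of $\ma_p$ equal those of $\mr\mr_s^{-1}$.

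The main step is then a clean computation showing $\sigma_i(\mr\mr_s^{-1})=1/\sigma_{n-i+1}(\ms\mathcal{F}\mq)$. I would establish this by examining $\ma_p^T\ma_p=\mr_s^{-T}\mr^T\mq^T\mq\mr\mr_s^{-1}=\mr_s^{-T}(\mr^T\mr)\mr_s^{-1}$, and comparing with $(\ms\mathcal{F}\mq)^T(\ms\mathcal{F}\mq)=\mr^{-T}\mr_s^T\mq_s^T\mq_s\mr_s\mr^{-1}=\mr^{-T}(\mr_s^T\mr_s)\mr^{-1}$, using $\mq_s^T\mq_s=\mi_n$. These two Gram matrices are inverses of each other (after accounting for the $\mr$, $\mr_s$ bookkeeping), so their eigenvalues are reciprocal, which yields the reciprocal and order-reversing relationship between the singular values. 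The condition-number identity $\kappa(\ms\mathcal{F}\mq)=\kappa(\ma_p)$ then follows immediately, since $\kappa$ is the ratio $\sigma_{\max}/\sigma_{\min}$ and reciprocating and reversing the singular value list leaves this ratio unchanged.

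The part requiring the most care is the rank and invertibility bookkeeping: $\ms\mathcal{F}\mq$ is a tall $c\times n$ matrix, so ``inverse'' must be read as a thin/pseudo-inverse, and one must verify that $\rank(\ma_s)=n$ guarantees $\mr_s$ is nonsingular and that the Gram-matrix reciprocity is exact rather than merely asymptotic. Since this is quoted as Lemma 4.1 in \cite{GI24}, I expect the argument to be short once the two Gram matrices are written down and recognized as mutual inverses; the only genuine obstacle is ensuring the orthonormal factors ($\mq$ and $\mq_s$) are correctly cancelled so that no spurious singular values are introduced.
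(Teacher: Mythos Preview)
The paper does not supply its own proof of this lemma; it is quoted as Lemma~4.1 from \cite{GI24}. Your overall strategy is correct and is presumably close to the original: with $\ma=\mq\mr$ and $\ma_s=\ms\mathcal{F}\mq\mr=\mq_s\mr_s$, one has $\ma_p=\mq(\mr\mr_s^{-1})$ and $\ms\mathcal{F}\mq=\mq_s(\mr_s\mr^{-1})$; since $\mq$ and $\mq_s$ have orthonormal columns, the singular values of $\ma_p$ and $\ms\mathcal{F}\mq$ coincide with those of the square nonsingular matrices $\mr\mr_s^{-1}$ and $(\mr\mr_s^{-1})^{-1}$, which are reciprocals with reversed ordering. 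You already state this line of reasoning in your second paragraph, and it is complete on its own.

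One caution about your Gram-matrix paragraph: you assert that $\ma_p^T\ma_p=\mr_s^{-T}\mr^T\mr\,\mr_s^{-1}$ and $(\ms\mathcal{F}\mq)^T(\ms\mathcal{F}\mq)=\mr^{-T}\mr_s^T\mr_s\,\mr^{-1}$ are ``inverses of each other.'' They are not. Setting $C\equiv\mr_s\mr^{-1}$, these two matrices equal $(CC^T)^{-1}$ and $C^TC$ respectively, and their product is the identity only when $C$ is normal. The desired reciprocal-eigenvalue conclusion still holds, but the reason is that $CC^T$ and $C^TC$ share the same eigenvalues, not that the two Gram matrices are mutual inverses. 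Either correct that sentence or, more simply, drop the Gram-matrix detour altogether, since the direct singular-value argument you sketch first already finishes the proof.
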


We extend  \cite[Corollary 4.2]{IpsW12} by deriving lower and bounds for the singular values
of the preconditioned matrix. The bounds below hold
for all singular value simultaneously.

\begin{theorem}\label{t_prob}
 Under Assumptions~\ref{ass2}, for any $0<\epsilon<1$ and $0<\delta<1$, if
\begin{equation*}
c\geq 2m\,\mu\,\left(1+\frac{\epsilon}{3}\right)\,\frac{\ln{(n/\delta)}}{\epsilon^2}
\end{equation*} 
then with probability at least $1-\delta$ 
\begin{equation*}
\sqrt{\frac{1}{1+\epsilon}}\leq \sigma_j(\ma_p)\leq \sqrt{\frac{1}{1-\epsilon}},
\qquad 1\leq j\leq n.
\end{equation*}
\end{theorem}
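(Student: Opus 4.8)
The plan is to reduce the claim to a matrix concentration statement for the sampled matrix $\ms\mathcal{F}\mq$ and then apply a matrix Chernoff bound. By Lemma~\ref{l_prob}, on the event that $\rank(\ma_s)=n$ we have $\sigma_i(\ms\mathcal{F}\mq)=1/\sigma_{n-i+1}(\ma_p)$, so the two-sided conclusion $\sqrt{1/(1+\epsilon)}\le\sigma_j(\ma_p)\le\sqrt{1/(1-\epsilon)}$ is equivalent to $\sqrt{1-\epsilon}\le\sigma_i(\ms\mathcal{F}\mq)\le\sqrt{1+\epsilon}$ for all $i$, that is, to
\[
1-\epsilon\le\lambda_{\min}\big((\ms\mathcal{F}\mq)^T\ms\mathcal{F}\mq\big)\quad\text{and}\quad\lambda_{\max}\big((\ms\mathcal{F}\mq)^T\ms\mathcal{F}\mq\big)\le 1+\epsilon.
\]
First I would note that $\mathcal{F}\mq$ has orthonormal columns, since $\mathcal{F}$ is orthogonal and $\mq^T\mq=\mi_n$, and that its rows obey $\|\ve_i^T\mathcal{F}\mq\|^2\le\mu$ by the definition of coherence. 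I would also record that the full-rank hypothesis of Lemma~\ref{l_prob} is not an extra assumption: it holds automatically on the good event, because $\lambda_{\min}\ge 1-\epsilon>0$ forces $\ms\mathcal{F}\mq$ to have full column rank.

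The key step is to write the Gram matrix as a sum of independent rank-one summands. With $\ms=\sqrt{m/c}\,[\ve_{k_1},\ldots,\ve_{k_c}]^T$,
\[
(\ms\mathcal{F}\mq)^T\ms\mathcal{F}\mq=\sum_{t=1}^c\mathbf{X}_t,\qquad \mathbf{X}_t\equiv\frac{m}{c}\,(\mathcal{F}\mq)^T\ve_{k_t}\ve_{k_t}^T(\mathcal{F}\mq),
\]
where the $\mathbf{X}_t$ are independent, symmetric, positive semidefinite, and rank one. Because each index $k_t$ is uniform on $\{1,\ldots,m\}$, I get $\E[\ve_{k_t}\ve_{k_t}^T]=\mi_m/m$, hence $\E[\mathbf{X}_t]=\mi_n/c$ and $\E[\sum_t\mathbf{X}_t]=\mi_n$; the sampled Gram matrix is thus an unbiased estimator of the identity, consistent with $\E[\ms^T\ms]=\mi_m$. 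The uniform spectral bound uses coherence: $\|\mathbf{X}_t\|=\frac{m}{c}\|\ve_{k_t}^T\mathcal{F}\mq\|^2\le m\mu/c\equiv R$ almost surely.

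I would then invoke the matrix Chernoff inequality (Tropp) for a sum of independent positive semidefinite matrices with mean $\mi_n$ and per-term bound $R$. The right tail contributes
\[
\Pr\big[\lambda_{\max}\ge 1+\epsilon\big]\le n\,\Big(\tfrac{e^{\epsilon}}{(1+\epsilon)^{1+\epsilon}}\Big)^{1/R}\le n\exp\!\Big(-\tfrac{\epsilon^2}{2(1+\epsilon/3)}\tfrac{c}{m\mu}\Big),
\]
while the left tail contributes $\Pr[\lambda_{\min}\le 1-\epsilon]\le n\,(e^{-\epsilon}/(1-\epsilon)^{1-\epsilon})^{1/R}\le n\exp(-\tfrac{\epsilon^2}{2}\tfrac{c}{m\mu})$. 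Forcing the right-tail bound to be at most $\delta$ and solving for $c$ yields exactly $c\ge 2m\mu(1+\epsilon/3)\ln(n/\delta)/\epsilon^2$; the left tail, whose exponent $\epsilon^2/2$ is larger, is then controlled with room to spare. A union bound over the two tails closes the argument, either by splitting the budget as $\delta/2$ per tail (which merely replaces $\ln(n/\delta)$ by $\ln(2n/\delta)$) or by absorbing the harmless factor of two.

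The main obstacle is bookkeeping rather than conceptual: obtaining the precise constant $(1+\epsilon/3)$ in the sampling bound requires the Bennett-type simplification $e^{\epsilon}/(1+\epsilon)^{1+\epsilon}\le\exp(-\epsilon^2/(2+2\epsilon/3))$ of the right-tail exponent, together with the observation that the right tail is the binding one. A secondary point to verify carefully is that coherence enters only through the almost-sure bound $R=m\mu/c$ and that the unbiasedness $\E[\sum_t\mathbf{X}_t]=\mi_n$ relies on sampling uniformly with replacement, matching the construction in Algorithm~\ref{alg_precond}.
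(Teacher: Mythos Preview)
Your proposal is correct and takes essentially the same approach as the paper: express the Gram matrix $(\ms\mathcal{F}\mq)^T\ms\mathcal{F}\mq$ as a sum of independent rank-one PSD matrices with mean $\mi_n$ and per-term norm at most $m\mu/c$ via coherence, apply matrix concentration, and translate back to $\sigma_j(\ma_p)$ through Lemma~\ref{l_prob}. The only cosmetic difference is that the paper invokes a single two-sided deviation bound $\Pr[\|\mx-\mi_n\|_2>\epsilon]\le n\exp\big(-c\epsilon^2/(2m\mu(1+\epsilon/3))\big)$ (taken from~\cite{HoI15}) and then reads off all eigenvalue bounds at once via Weyl's theorem, whereas you bound $\lambda_{\max}$ and $\lambda_{\min}$ separately with the matrix Chernoff inequality and close with a union bound, incurring at most the harmless factor of two inside the logarithm that you already flagged.
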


\begin{proof}$\ $
Set $\mx\equiv (\ms\mathcal{F}\mq)^T(\ms\mathcal{F}\mq)\in\rnn$ where
$\E[\mx]=\mi_n=(\mathcal{F}\mq)^T(\mathcal{F}\mq)$.
Apply steps 1-4 in the proof of \cite[Theorem 7.5]{HoI15} to deduce
\begin{equation*}
\myP[\|\mx-\mi_n\|_2>\epsilon]\leq n\exp\left(\frac{-c\epsilon^2}{2m\mu(1+\epsilon/3)}
\right).
\end{equation*}
Then solve for $c$. Weyl's theorem \cite[Corollary 8.1.6]{GovL13}
implies for the eigenvalues
\begin{equation*}
\max_{1\leq j\leq n}{|\lambda_j(\mx)-1|}\leq \|\mx-\mi_n\|_2\leq \epsilon.
\end{equation*}
Hence $1-\epsilon\leq \lambda_j(\mx)\leq 1+\epsilon$, $1\leq j\leq n$.
The result follows from $\lambda_j(\mx)=\sigma_j(\ms\mathcal{F}\mq)^2$ 
and Lemma~\ref{l_prob}.
\end{proof}

Theorem~\ref{t_prob} implies probabilistic lower and upper bounds for the condition number of
the preconditioned matrix \cite{Blendenpik,IpsW12,RT08},
 \begin{equation}\label{e_condAp}
\sqrt{\frac{1-\epsilon}{1+\epsilon}}\leq 
\kappa(\ma_p)\leq \sqrt{\frac{1+\epsilon}{1-\epsilon}}.
\end{equation}
Although $\kappa(\ma_p)\geq 1$, the above lower bound illustrates similarities
with the subsequent Theorem~\ref{t_prob2}.

  The purpose of the row mixing matrix $\mathcal{F}$ is to improve the coherence so that 
  $\mu\approx n/m$, thereby making the lower bound for $c$ independent of $m$ \cite[Section 3.2]{Blendenpik}.

We apply Theorem~\ref{t_prob} to derive probabilistic lower and upper bounds on the condition 
numbers of: the preconditioner, the matrix in the preconditioned normal  equations, and the matrix in the half-preconditioned normal equations.

\begin{theorem}\label{t_prob2}
Under Assumptions~\ref{ass2}, for any $0<\epsilon<1$ and $0<\delta<1$, if
\begin{equation*}
c\geq 2m\,\mu\,\left(1+\frac{\epsilon}{3}\right)\,\frac{\ln{(n/\delta)}}{\epsilon^2}
\end{equation*} 
then with probability at least $1-\delta$, the following hold simultaneously,
\begin{equation*}
\sqrt{\frac{1-\epsilon}{1+\epsilon}}
\,\kappa(\ma)\leq\kappa(\mr_s)\leq \sqrt{\frac{1+\epsilon}{1-\epsilon}}\,\kappa(\ma), \qquad
\frac{1-\epsilon}{1+\epsilon}\leq \kappa(\ma_p^T\ma_p)\leq \frac{1+\epsilon}{1-\epsilon},\qquad
\end{equation*}
and
\begin{equation*}
\sqrt{\frac{1-\epsilon}{1+\epsilon}}\,\kappa(\ma)\leq \kappa(\ma_p^T\ma)
\leq \sqrt{\frac{1+\epsilon}{1-\epsilon}}\,\kappa(\ma).
\end{equation*}
\end{theorem}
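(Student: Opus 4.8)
The plan is to derive all three pairs of bounds deterministically from the single high-probability event produced by Theorem~\ref{t_prob}, so that no additional union bound is needed and the stated failure probability $\delta$ is inherited unchanged. Concretely, I would first invoke Theorem~\ref{t_prob}: under the sampling condition on $c$, with probability at least $1-\delta$ every singular value of $\ma_p$ lies in $[\sqrt{1/(1+\epsilon)},\,\sqrt{1/(1-\epsilon)}]$, and this is exactly the event I condition on for the remainder. On this event the singular values are bounded away from zero, so $\rank(\ma_s)=n$, the matrix $\mr_s$ is nonsingular, and $\kappa(\ma_p)$ obeys the two-sided bound~(\ref{e_condAp}). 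The middle assertion is then immediate: since $\kappa(\ma_p^T\ma_p)=\kappa(\ma_p)^2$, squaring~(\ref{e_condAp}) yields $(1-\epsilon)/(1+\epsilon)\le\kappa(\ma_p^T\ma_p)\le(1+\epsilon)/(1-\epsilon)$.

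For the preconditioner I would exploit the factorization $\ma=\ma_p\mr_s$. Because $\ma_p$ has full column rank, the singular value product inequalities give $\sigma_n(\ma_p)\,\sigma_j(\mr_s)\le\sigma_j(\ma)\le\sigma_1(\ma_p)\,\sigma_j(\mr_s)$ for $1\le j\le n$, equivalently $\sigma_j(\ma)/\sigma_1(\ma_p)\le\sigma_j(\mr_s)\le\sigma_j(\ma)/\sigma_n(\ma_p)$. Combining the extreme cases $j=1$ and $j=n$ and forming the ratio $\sigma_1(\mr_s)/\sigma_n(\mr_s)$ sandwiches the condition number as $\kappa(\ma)/\kappa(\ma_p)\le\kappa(\mr_s)\le\kappa(\ma)\,\kappa(\ma_p)$. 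Substituting the bounds on $\kappa(\ma_p)$ from~(\ref{e_condAp}) converts this into $\sqrt{(1-\epsilon)/(1+\epsilon)}\,\kappa(\ma)\le\kappa(\mr_s)\le\sqrt{(1+\epsilon)/(1-\epsilon)}\,\kappa(\ma)$, the first assertion.

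The third assertion follows identically, using instead the product inequality $\sigma_n(\ma_p)\,\sigma_j(\ma)\le\sigma_j(\ma_p^T\ma)\le\sigma_1(\ma_p)\,\sigma_j(\ma)$ already recorded in the discussion after Theorem~\ref{t_2} (which itself comes from writing $\ma_p^T\ma=\mr_p^T\mr$ with $\ma=\mq\mr$ and $\ma_p=\mq\mr_p$). The same ratio argument gives $\kappa(\ma)/\kappa(\ma_p)\le\kappa(\ma_p^T\ma)\le\kappa(\ma)\,\kappa(\ma_p)$, and substituting~(\ref{e_condAp}) produces the claimed bounds. The only genuinely delicate point, and the one I would be most careful about, is the \emph{lower} bounds: the familiar submultiplicative estimates deliver only the upper bounds $\kappa(\mr_s),\kappa(\ma_p^T\ma)\le\kappa(\ma)\,\kappa(\ma_p)$, whereas the matching lower bounds require the full-column-rank version of the product inequality, the one involving $\sigma_{\min}(\ma_p)=\sigma_n(\ma_p)$, together with the fact that $\sigma_n(\ma_p)>0$ holds precisely on the event of Theorem~\ref{t_prob}. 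Everything else is routine manipulation of the estimates in~(\ref{e_condAp}).
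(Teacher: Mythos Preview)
Your proposal is correct and follows essentially the same route as the paper: the paper, too, conditions on the event of Theorem~\ref{t_prob}, applies the singular value product inequalities to the factorization $\ma=\ma_p\mr_s$ to obtain the bound on $\kappa(\mr_s)$, and remarks that the remaining two bounds follow analogously. Your write-up is more explicit about the lower bounds and about why the full-rank hypothesis on $\ma_p$ is automatically satisfied on that event, but the underlying argument is the same.
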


\begin{proof}$\ $
The bound for $\kappa(\mr_s)$ follows from the application of the singular value product inequalities  \cite[(7.3.14)]{HoJoI} to $\ma=\ma_p\mr_s$,
\begin{equation*}
\sigma_n(\ma_p)\sigma_j(\mr_s)\leq \sigma_j(\ma)\leq \sigma_1(\ma_p)\sigma_j(\mr_s),\qquad
1\leq j\leq n,
\end{equation*}
and (\ref{e_condAp}). The remaining inequalities are derived analogously. 
\end{proof}

Theorem~\ref{t_prob2} implies that for small $\delta$ and $\epsilon$, the condition numbers
of $\mr_s$ and $\ma_p^T\ma$ are close to that of $\kappa(\ma)$; and the condition number
of $\ma_p^T\ma_p$ is close to one.

\section{Numerical Experiments}\label{s_num}
We illustrate the accuracy of the preconditioned and 
half-preconditioned normal equations, and the perturbation bounds. After
the set up of the numerical experiments (Section~\ref{s_num1}), we present numerical
experiments for the preconditioned normal equations (Section~\ref{s_num2}),
 the half preconditioned normal equations (Section~\ref{s_num3}), and for both when
 the matrices are highly illconditioned (Section~\ref{s_num4}).

\subsection{Set up of Experiments}\label{s_num1}
Algorithm~\ref{alg_exact} presents Matlab pseudocode for
the  computation of the `exact' quantities in the least squares problem (\ref{e_ls}),
as motivated by \cite[Section 1.5]{MNTW24}.

\begin{algorithm}[!t]
\caption{Constructing the least squares problem}\label{alg_exact}
\begin{algorithmic}
\Require Matrix dimensions $m$ and $n$, and condition number $\kappa$
\State $\qquad\ $  Least squares residual norm $\eta_r$
\Ensure Matrix $\ma\in\rmn$ with $\kappa(\ma)=\kappa$,
righthand side $\vb\in\real^m$
\State $\qquad\quad$ Solution $\vx_*\in\rn$ with $\|\vx_*\|=1$
\State $\qquad\quad$ Least squares residual $\ve\equiv \vb-\ma\vx_*\in\real^m$
with $\|\ve\|=\eta_r$
\medskip

\State \Comment{Compute $\ma$}
\State Compute orthogonal matrix $\mq=\begin{bmatrix}\mq_1&\mq_2\end{bmatrix}\in\rmm$ with $\mq_1\in\rmn$
\State Compute upper triangular matrix $\mr\in\rnn$ with $\kappa(\mr)=\kappa$
\State Multiply $\ma=\mq_1\mr\qquad$
\Comment{Thin QR with $\range(\mq_1)=\range(\ma)$}
\medskip

\State \Comment{Compute solution $\vx_*$ with $\|\vx_*\|=1$}
\State $\vx = \texttt{randn}(n, 1)\qquad$ \Comment{Standard random normal vector}
\State $\vx_*=\vx/\|\vx\|$
\medskip

\State \Comment{Compute least squares residual}
\State $\ve_r= \mq_2\mq_2^T\ \texttt{randn}(m,1)\qquad$
\Comment{noisevector $\ve_r$ orthogonal to $\range(\ma)$}
\State $\ve = \eta_r\,\ve_{r}/\|\ve_{r}\|\qquad$
 \Comment{Absolute residual  norm $\|\ma\vx_*-\vb\|=\eta_r$}
 \medskip
 
 \Comment{Compute righthand side $\vb$}
\State $\vb = \ma\vx_*+\ve\qquad $
\end{algorithmic}
\end{algorithm}

We choose matrices $\ma$ with two norm $\|\ma\|=1$,
$m=6,000$ rows, and a number of columns equal to $n=400$ and $n=1,000$.
The condition numbers are $\kappa(\ma)=10^8$,
at which point the ordinary normal equations (\ref{e_ne1})
are too ill-conditioned.
The sampling amount for the preconditioner in Algorithm~\ref{alg_precond} is  $c=3n$.

Since $\|\ma\|=\|\vx_*\|=1$,
the absolute least squares residuals $\|\ma\vx_*-\vb\|$
are equal to the relative least squares residuals $\frac{\|\ma\vx_*-\vb\|}{\|\ma\|\|\vx_*\|}$,
and they vary in norm from $10^{-16}$ all the way up to~1.

We use the IEEE double precision machine epsilon
$\mathtt{eps}\equiv2^{-52}\approx 2.22\cdot 10^{-16}$ in the perturbation bounds.

For the least squares solution, we compute a bound that is a slight variation of that in 
Lemma~\ref{l_ls},
\begin{equation}\label{e_ls1}
\frac{\|\vhx-\vx_*\|}{\|\vhx\|}\lesssim\kappa(\ma)\,\mathtt{eps}\, 
\left(1+\kappa(\ma)\frac{\|\vb-\ma\vhx\|}{\|\ma\|\|\vhx\|}\right).
\end{equation}
With $\kappa(\ma)=10^8$,
the least squares residual starts to dominate the bound once it increases beyond $10^{-8}$.

\subsection{Preconditioned Normal Equations}\label{s_num2}
We illustrate the accuracy of the preconditioned normal equations and their bounds
(Figures \ref{fig2} and~\ref{fig3}).
The matrix in Figure~\ref{fig2} has 400 columns, while the one in Figure~\ref{fig3} has 1,000
columns.

We compute the perturbation bound in Theorem~\ref{t_3} as 
\begin{equation}\label{e_pne4}
\frac{\|\vx_*-\vhx\|}{\|\vhx\|}\leq \kappa(\mr_s)\,\kappa(\ma_p)\nu\,\mathtt{eps}
\left(1+\kappa(\ma_p)\, \eta
\left(\frac{\|\vb-\ma_p\vhy\|}{\|\ma_p\|\|\vhy\|}+\mathtt{eps}\right)\right),
\end{equation}
where 
\begin{equation*}
\nu\equiv \frac{\|\mr_s\vhx\|}{\|\mr_s\| \|\vhx\|}\leq 1\qquad \text{and}\qquad
\eta\equiv \frac{\kappa(\mr_s)}{1-\kappa(\mr_s)\,\mathtt{eps}}.
\end{equation*}

Figures \ref{fig2} and~\ref{fig3} illustrate that
the computed solutions of the preconditioned normal 
equations~(\ref{e_pne}) are almost as accurate as the Matlab solutions. Compared
with the actual error, the bound~(\ref{e_pne4}) is of the same quality as the 
traditional bound~(\ref{e_ls1}). In particular, (\ref{e_pne4}) captures the increase in the least squares residual.

\subsection{Half Preconditioned Normal Equations}\label{s_num3}
We illustrate the accuracy of the half preconditioned normal equations and their bounds
(Figures \ref{fig5} and~\ref{fig6}).
The matrix in Figure~\ref{fig5} has 400 columns, while the one in Figure~\ref{fig6} has 1,000
columns.

We compute the bound from Theorem~\ref{t_2} as
\begin{equation}\label{e_hpne4}
\frac{\|\vx_*-\vhx\|}{\|\vhx\|}\leq \kappa(\ma_p^T\ma)\, \nu\, \mathtt{eps}\,
\left(1+\eta\frac{\|\vb-\ma\vhx\|}{\|\ma\|\|\vhx\|}+\eta\mathtt{eps}\right),
\end{equation}
where 
\begin{equation*}
\nu\equiv \frac{\|\ma_p\|\|\ma\|}{\|\ma_p^T\ma\|}\geq 1,\qquad 
\eta\equiv \frac{\kappa(\mr_s)}{1-\kappa(\mr_s)\mathtt{eps}}.
\end{equation*}

Figures \ref{fig5} and~\ref{fig6} illustrate that
the computed solutions of the half preconditioned normal 
equations~(\ref{e_hp}) are almost as accurate as the Matlab solutions. Compared
with the actual error, the bound~(\ref{e_hpne4}) is of the same quality as the 
traditional bound~(\ref{e_ls1}). In particular, (\ref{e_hpne4}) captures the increase in the least squares residual.

\begin{figure}
\begin{center}
\resizebox{3in}{!}
{\includegraphics{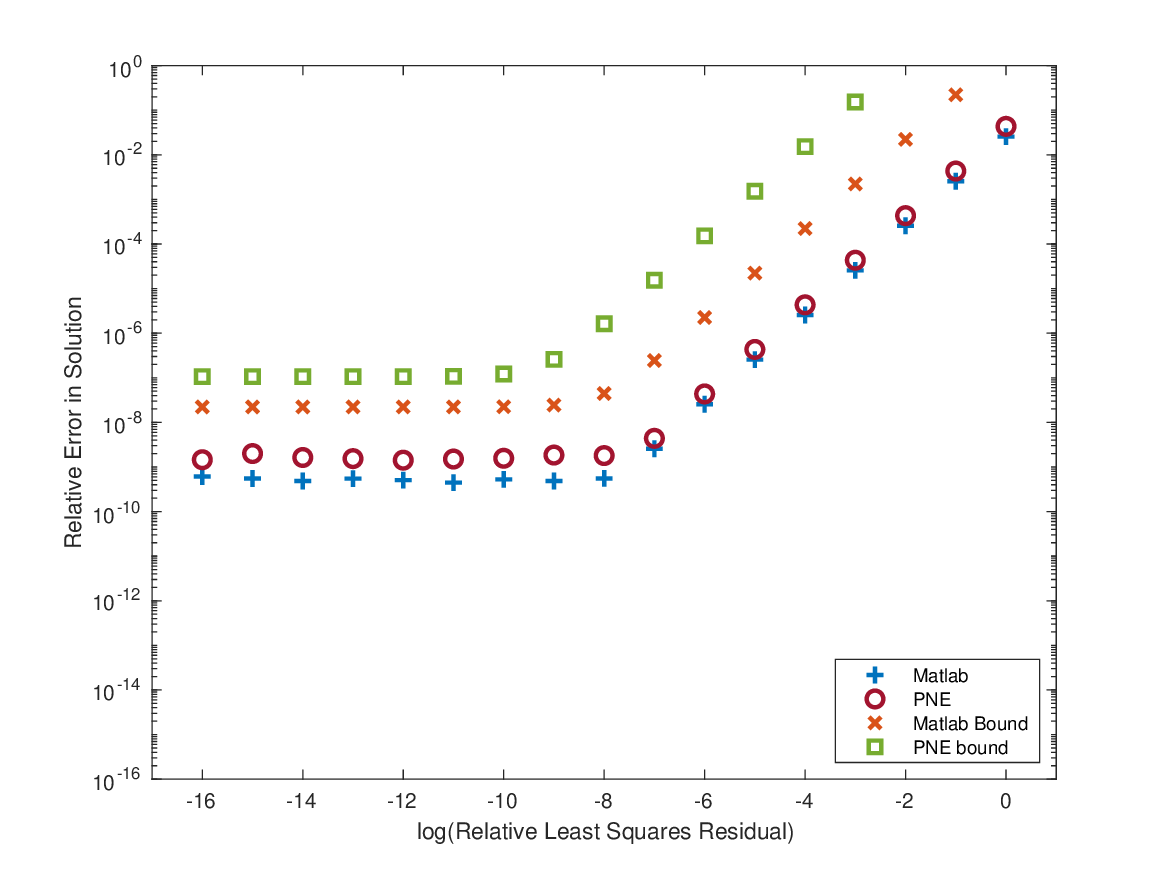}} 
\end{center}
\caption{Preconditioned normal equations: Relative errors in the computed solutions $\vhx$ 
and perturbation bounds versus logarithm of relative 
least squares residuals $\|\vb-\ma\vx_*\|/(\|\ma\|\|\vx_*\|)$ for 
$\ma\in\real^{6,000\times 400}$ with condition number $\kappa(\ma)=10^8$,
and preconditioned matrix $\ma_p$ with condition number $\kappa(\ma_p)\approx 3.82$.
Shown are the Matlab backslash solutions (blue plusses) and the bound~(\ref{e_ls1}) (red crosses); and the solutions from the preconditioned normal
equations (magenta circles) and the bound (\ref{e_pne4}) (green squares).}
\label{fig2}
\end{figure}

\begin{figure}
\begin{center}
\resizebox{2.9in}{!}
{\includegraphics{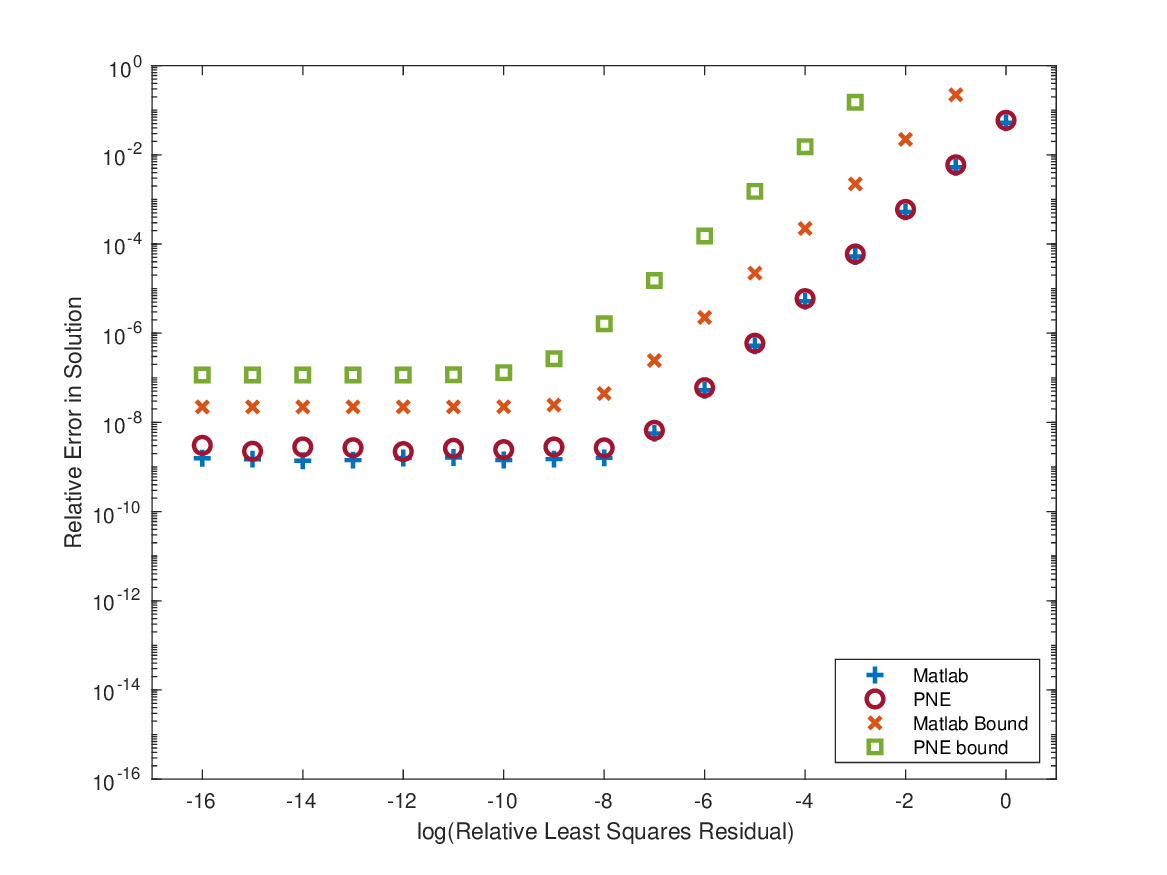}} 
\end{center}
\caption{Preconditioned normal equations: Relative errors in the computed solutions $\vhx$ 
and perturbation bounds versus logarithm of relative 
least squares residuals $\|\vb-\ma\vx_*\|/(\|\ma\|\|\vx_*\|)$ for 
$\ma\in\real^{6,000\times 1000}$ with condition number $\kappa(\ma)=10^8$,
and preconditioned matrix $\ma_p$ with condition number $\kappa(\ma_p)\approx 4.2$.
Shown are the Matlab backslash solutions (blue plusses) and the bound~(\ref{e_ls1}) (red crosses); and the solutions from the preconditioned normal
equations (magenta circles) and the bound (\ref{e_pne4}) (green squares).}\label{fig3}
\end{figure}

\begin{figure}
\begin{center}
\resizebox{2.9in}{!}
{\includegraphics{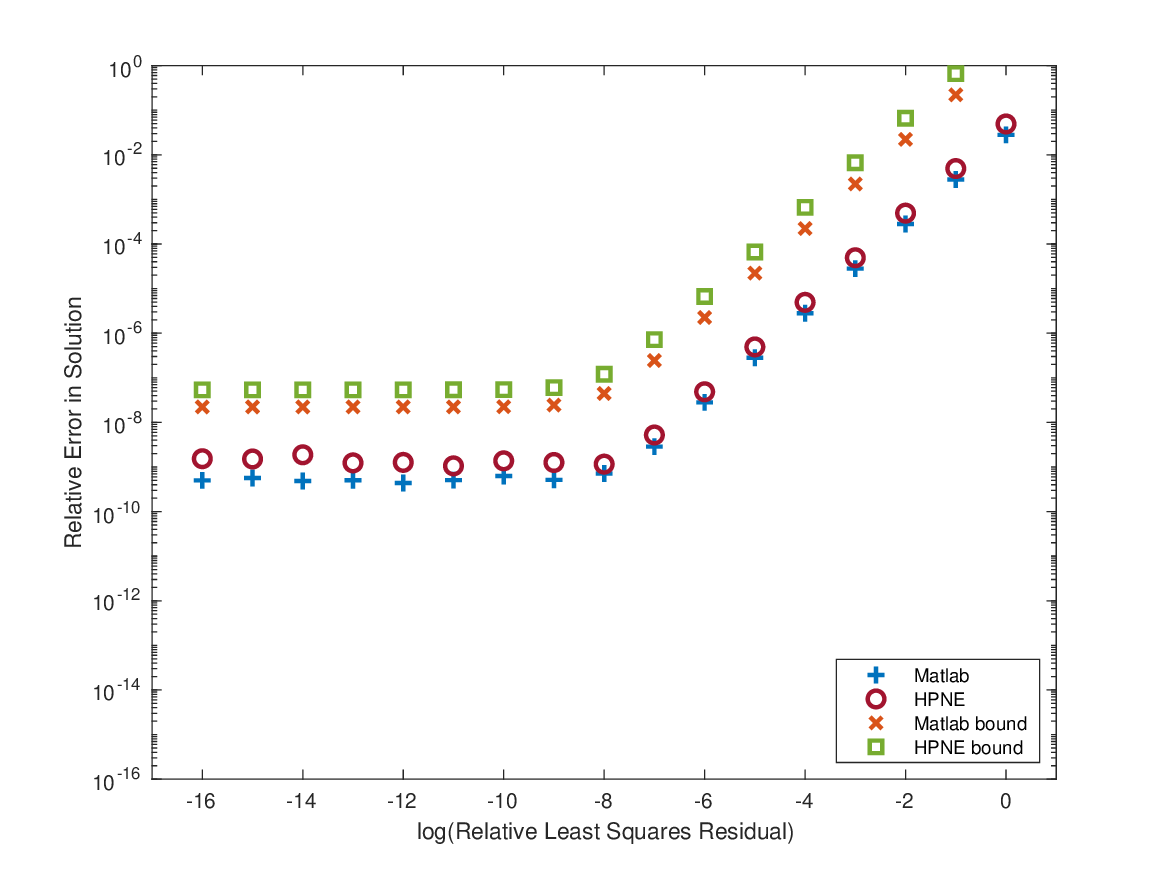}} 
\end{center}
\caption{Half preconditioned normal equations: Relative errors in the computed solutions $\vhx$ 
and perturbation bounds versus logarithm of relative 
least squares residuals $\|\vb-\ma\vx_*\|/(\|\ma\|\|\vx_*\|)$ for 
$\ma\in\real^{6,000\times 1,000}$ with condition number $\kappa(\ma)=10^8$,
and preconditioned matrix $\ma_p$ with condition number $\kappa(\ma_p)\approx 4.28$.
Shown are the Matlab backslash solutions (blue plusses) and the bound~(\ref{e_ls1}) (red crosses); and the solutions from the half-preconditioned normal
equations (magenta circles) and the bound (\ref{e_hpne4}) (green squares).}\label{fig5}
\end{figure}

\begin{figure}
\begin{center}
\resizebox{2.9in}{!}
{\includegraphics{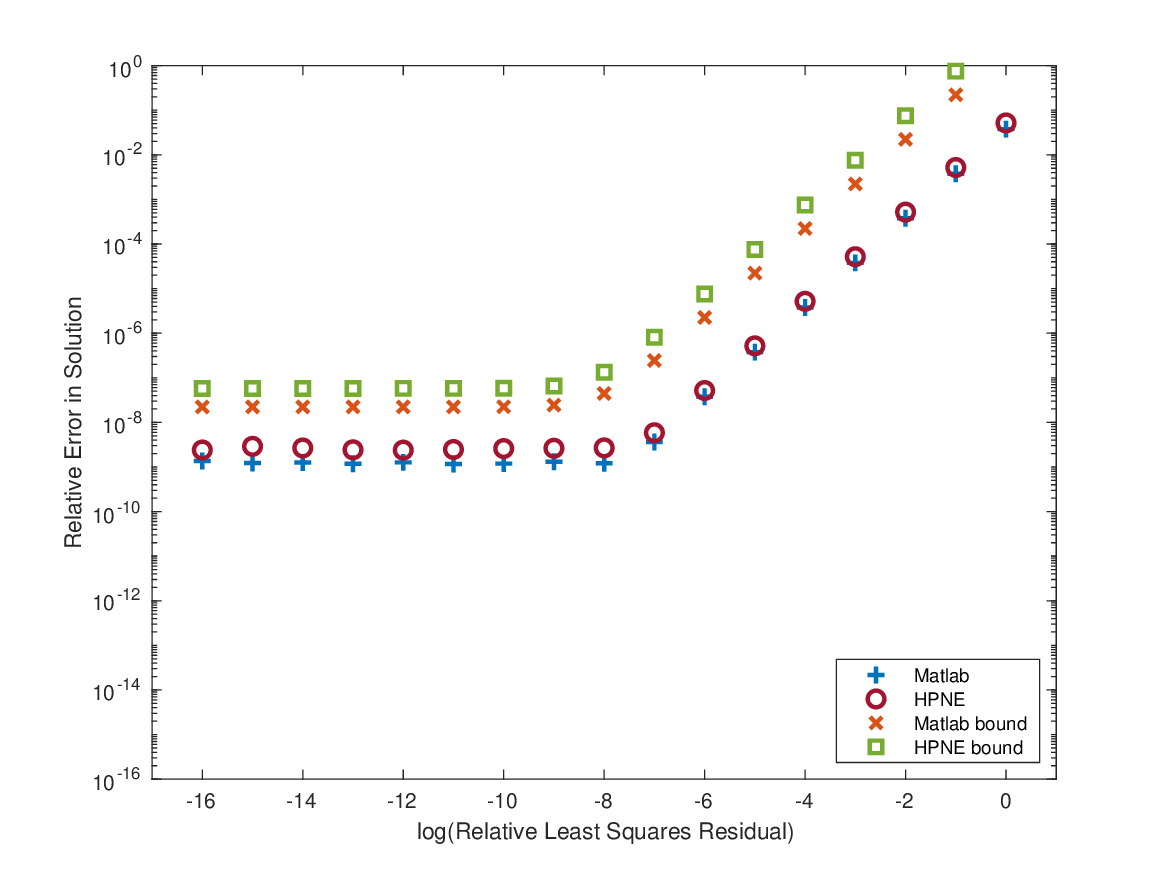}} 
\end{center}
\caption{Half preconditioned normal equations: Relative errors in the computed solutions $\vhx$ 
and perturbation bounds versus logarithm of relative 
least squares residuals $\|\vb-\ma\vx_*\|/(\|\ma\|\|\vx_*\|)$ for 
$\ma\in\real^{6,000\times 400}$ with condition number $\kappa(\ma)=10^8$,
and preconditioned matrix $\ma_p$ with condition number $\kappa(\ma_p)\approx 3.8$.
Shown are the Matlab backslash solutions (blue plusses) and the bound~(\ref{e_ls1}) (red crosses); and the solutions from the half-preconditioned normal
equations (magenta circles) and the bound (\ref{e_hpne4}) (green squares).}\label{fig6}
\end{figure}

\subsection{Highly illconditioned matrices}\label{s_num4}
We illustrate the accuracy of the preconditioned and half preconditioned normal equations
even for highly illconditioned matrices (Figure~\ref{fig7}).

With $\kappa(\ma)=10^{12}$, 
the least squares residual starts to dominate the bound once it increases beyond $10^{-12}$.
Figure~\ref{fig7} illustrates that preconditioned and half preconditioned normal equations
maintain an accuracy of $\kappa(\ma)\mathtt{eps}\approx 10^{-4}$ until the least squares residual
increases beyond $10^{-12}$.

\begin{figure}
\begin{center}
\resizebox{2.9in}{!}
{\includegraphics{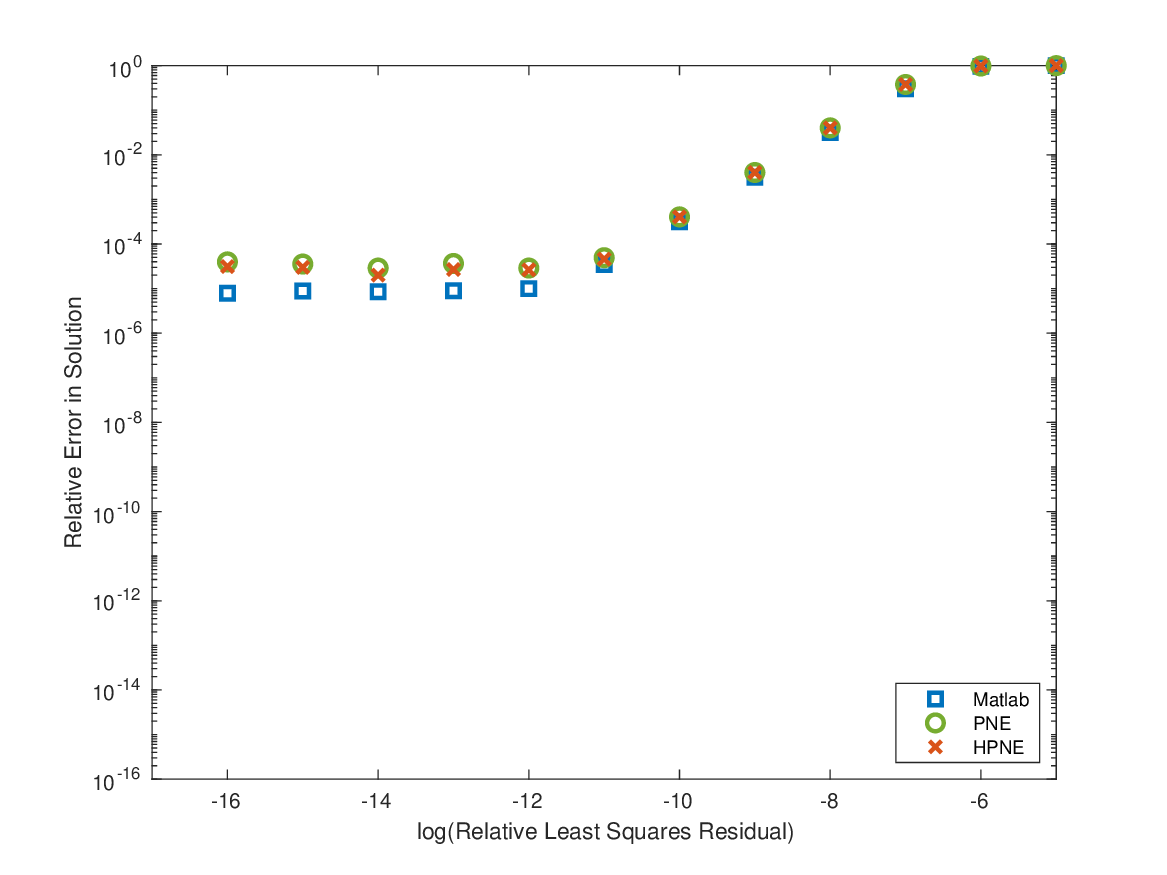}} 
\end{center}
\caption{Relative errors in three different computed solutions $\vhx$ versus logarithm of relative 
least squares residuals $\|\vb-\ma\vx_*\|/(\|\ma\|\|\vx_*\|)$ for 
$\ma\in\real^{6,000\times 1000}$ with condition number $\kappa(\ma)=10^{12}$,
and preconditioned matrix $\ma_p$ with condition number $\kappa(\ma_p)\approx 4.3$.
The solutions are computed with Matlab (blue squares), preconditioned normal
equations (green circles) and half preconditioned normal equations (red crosses).}\label{fig7}
\end{figure}

\section{Future Work}
Our perturbation analysis and numerical experiments show that the normal equations, when preconditioned either on both sides or else only on the left side by a randomized preconditioner,
are well conditioned. When solved with direct methods, they
produce a solution that 
is almost as accurate as the one from the QR-based Matlab backslash, and accurately
adapt to the size of the least squares residual.
for rectangular matrices, is based on a QR decomposition. 
This means, that the solution accuracy of the preconditioned normal equation depends on the residual
of the original least squares problem -- even though there is officially no least squares problem present and
the half-preconditioned normal equations
do not even have an equivalent least squares problem.

While the present paper focusses on perturbation theory and on numerical accuracy of direct methods, 
future work will investigate computational
speed. 
First is a comparison with established methods for solving least squares problems, including the QR decomposition, the unpreconditioned normal equations,
and the randomized iterative  solver Blendenpik \cite{Blendenpik}.
Second is a speed up of the preconditioned normal equations
via a mixed precision implementation, where  the preconditioner
 is computed in a lower arithmetic precision, and then promoted back to double precision for the 
computation of the preconditioned matrix.
Third is a GPU implementation of the mixed precision version.

\section*{Acknowledgement}
I thank James Garrison for helpful discussions, and two anonymous reviewers for help with 
improving the presentation.

\appendix
\section{Alternative perturbation bounds}\label{s_other}
We present a perturbation bound for the normal equations (Section~\ref{s_ne}), and
alternative perturbation bounds for the preconditioned normal equations (Section~\ref{s_apne}),
half-preconditioned normal equations (Section~\ref{s_ahpne}), and right preconditioned least squares problems.

\subsection{Perturbation of the normal equations}\label{s_ne}
Lemma~\ref{l_a2} presents a perturbation bound for the normal equations
that depends on the least squares residual.

Let $\ma\in\rmn$ with $\rank(\ma)=n$, and $\vb\in\real^m$. 
The exact normal equations are
\begin{equation*}
\ma^T\ma\vx_*=\ma^T\vb.
\end{equation*}
We perturb the matrix $\ma$ but make no assumptions on the size of the
perturbation, so that the perturbed matrix $\ma+\me$ has the potential to be 
rank deficient.

\begin{lemma}\label{l_a2}
Let $\me\in\rmn$, $\epsilon\equiv \|\me\|/\|\ma\|$, and
\begin{equation*}
(\ma+\me)^T(\ma+\me)\vhx=(\ma+\me)^T\vb.
\end{equation*}
If $\vhx\neq \vzero$, then
\begin{equation*}
\frac{\|\vx_*-\vhx\|}{\|\vhx\|}\leq \kappa(\ma)^2\>\epsilon\, 
\left(\frac{\|\vb-\ma\vhx\|}{\|\ma\|\|\vhx\|}+1+\epsilon\right).
\end{equation*}
\end{lemma}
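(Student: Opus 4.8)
The plan is to derive an exact closed-form expression for the error $\vx_*-\vhx$ in terms of the perturbation $\me$ and the \emph{computed} residual $\vb-\ma\vhx$, and then take norms. Because no smallness assumption is placed on $\me$, the argument stays purely algebraic and needs no Neumann-series expansion.

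First I would subtract $\ma^T\ma\vhx$ from both sides of the exact normal equations $\ma^T\ma\vx_*=\ma^T\vb$ to obtain the identity
\begin{equation*}
\ma^T\ma(\vx_*-\vhx)=\ma^T(\vb-\ma\vhx).
\end{equation*}
The right-hand side involves only the unperturbed $\ma$ and is not obviously small, so the next step injects information from the perturbed equation. Rewriting $(\ma+\me)^T(\ma+\me)\vhx=(\ma+\me)^T\vb$ as $(\ma+\me)^T\bigl(\vb-(\ma+\me)\vhx\bigr)=\vzero$ and expanding, I would solve for the quantity above,
\begin{equation*}
\ma^T(\vb-\ma\vhx)=\ma^T\me\vhx-\me^T(\vb-\ma\vhx)+\me^T\me\vhx.
\end{equation*}
This is the crucial algebraic step; it mirrors the rearrangement used in the proof of Lemma~\ref{l_5}, and it is precisely what trades the \emph{exact} residual for the \emph{computed} residual $\vb-\ma\vhx$, making the final bound residual-dependent rather than involving the unknown $\vb-\ma\vx_*$.

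Multiplying through by $(\ma^T\ma)^{-1}$ and using $(\ma^T\ma)^{-1}\ma^T=\ma^{\dagger}$ then yields the exact error representation
\begin{equation*}
\vx_*-\vhx=\ma^{\dagger}\me\vhx-(\ma^T\ma)^{-1}\me^T(\vb-\ma\vhx)+(\ma^T\ma)^{-1}\me^T\me\vhx.
\end{equation*}
Taking norms, dividing by $\|\vhx\|$, and substituting $\|\ma^{\dagger}\|=\kappa(\ma)/\|\ma\|$ together with $\|(\ma^T\ma)^{-1}\|=\|\ma^{\dagger}\|^2=\kappa(\ma)^2/\|\ma\|^2$ produces three terms bounded respectively by $\kappa(\ma)\epsilon$, by $\kappa(\ma)^2\epsilon\,\frac{\|\vb-\ma\vhx\|}{\|\ma\|\|\vhx\|}$, and by $\kappa(\ma)^2\epsilon^2$. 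The final cleanup is routine: since $\kappa(\ma)\geq 1$ gives $\kappa(\ma)\epsilon\leq\kappa(\ma)^2\epsilon$, collecting all three under the common factor $\kappa(\ma)^2\epsilon$ recovers exactly the claimed bound.

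The main obstacle I anticipate is bookkeeping in the expansion step: one must keep $\vb-\ma\vhx$ (with the unperturbed $\ma$) intact throughout, since silently replacing it by $\vb-(\ma+\me)\vhx$ or by the exact residual $\vb-\ma\vx_*$ would either break the identity or reintroduce a quantity that cannot be measured from the computed solution. Everything downstream of the error representation is a direct norm estimate.
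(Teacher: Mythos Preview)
Your proof is correct and follows essentially the same route as the paper: both derive the identity $\ma^T\ma(\vx_*-\vhx)=\me^T(\ma\vhx-\vb)+(\ma+\me)^T\me\vhx$ from the perturbed normal equations, multiply by $(\ma^T\ma)^{-1}$, and take norms. The only cosmetic difference is that you split $(\ma+\me)^T\me\vhx$ into $\ma^T\me\vhx+\me^T\me\vhx$ and apply $\ma^{\dagger}$ to the first piece (yielding the slightly sharper $\kappa(\ma)\epsilon$, which you then relax via $\kappa(\ma)\geq 1$), whereas the paper keeps the factor $(\ma+\me)^T$ intact and bounds it directly.
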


\begin{proof}$\ $
Write the perturbed system as 
\begin{equation*}
\ma^T\vb-\ma^T\ma\vhx= \me^T(\ma\vhx-\vb)+(\ma+\me)^T\me\vhx.
\end{equation*}
Multiply by $(\ma^T\ma)^{-1}$
\begin{equation*}
\vx_*-\vhx=
(\ma^T\ma)^{-1}\left(\me^T(\ma\vhx-\vb)+(\ma+\me)^T\me\vhx\right),
\end{equation*}
and take norms.
\end{proof}

Lemma~\ref{l_a2} implies that, to first order, the relative error in $\vhx$ is bounded by
\begin{equation*}
\frac{\|\vx_*-\vhx\|}{\|\vhx\|}\lesssim \kappa(\ma)^2\,\epsilon\, 
\max\left\{\frac{\|\vb-\ma\vhx\|}{\|\ma\|\|\vhx\|}, 1\right\}.
\end{equation*}
This suggests that the solution accuracy of the normal equations
depends on the least squares residual when it is large, that is, if
$\frac{\|\vb-\ma\vhx\|}{\|\ma\|\|\vhx\|}>1$.   

\subsection{Alternative perturbation bounds for the preconditioned normal equations}\label{s_apne}
Lemmas \ref{l_a3} and~\ref{l_a6} present two alternative perturbation bounds for the
preconditioned normal equations, under the following assumptions.

Let $\ma\in\rmn$ have $\rank(\ma)=n$. Let $\mr_s\in\rnn$ be a fixed nonsingular matrix, and
$\ma_p\equiv \ma\mr_s^{-1}$.
The exact problem is
\begin{align*}
\ma_p^T\ma_p\vy_*&=\ma_p^T\vb\\
\mr_s\vx_*&=\vy_*.
\end{align*}
Since $\ma^T\ma$ and $\mr_s$ are nonsingular, so is the preconditioned matrix $\ma_p^T\ma_p$.

Lemma~\ref{l_a3} below perturbs all instances of the preconditioned matrix $\ma_p$ by the same 
perturbation~$\me_p$.

\begin{lemma}\label{l_a3}
Let $\me\in\rmn$, $\epsilon\equiv \|\me_p\|/\|\ma_p\|$, and
\begin{align}
(\ma_p+\me_p)^T(\ma_p+\me_p)\vhy&=(\ma_p+\me_p)^T\vb\label{e_l31}\\
\mr_s\vhx&=\vhy.\label{e_l32}
\end{align}
If $\vhy\neq \vzero$ and $\vhx\neq \vzero$, then
\begin{equation*}
\frac{\|\vx_*-\vhx\|}{\|\vhx\|}\leq \kappa(\mr_s)\ \nu\,\kappa(\ma_p)^2\>\epsilon\,
\left(\frac{\|\vb-\ma_p\vhy\|}{\|\ma_p\|\|\vhy\|}+1+\epsilon\right),
\qquad \nu\equiv \frac{\|\mr_s\vhx\|}{\|\mr_s\|\|\vhx\|}\leq 1.
\end{equation*}
\end{lemma}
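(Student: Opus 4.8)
The plan is to reuse the two-step decomposition that underlies Theorem~\ref{t_3}, replacing only the bound for the first step. The perturbed system here splits into a normal-equations step (\ref{e_l31}) and a triangular solve (\ref{e_l32}). Unlike Theorem~\ref{t_3}, where the first step carried two distinct (nonsymmetric) perturbations and so required Lemma~\ref{l_5}, the first step (\ref{e_l31}) now perturbs every occurrence of $\ma_p$ by the single matrix $\me_p$, making it a \emph{symmetric} perturbation of the normal equations. I would therefore bound the first step with Lemma~\ref{l_a2} and bound the second step with Lemma~\ref{l_4}, then compose the two.

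First I would apply Lemma~\ref{l_a2} to (\ref{e_l31}) under the identification $\ma \to \ma_p$, $\me \to \me_p$, $\vx_* \to \vy_*$, $\vhx \to \vhy$. The hypotheses transfer cleanly: $\ma_p = \ma\mr_s^{-1}$ inherits $\rank(\ma_p) = n$ from the full column rank of $\ma$ and the nonsingularity of $\mr_s$, so $(\ma_p^T\ma_p)^{-1}$ exists, and the relative perturbation $\|\me_p\|/\|\ma_p\|$ is exactly the $\epsilon$ of the statement. Lemma~\ref{l_a2} then yields
\[
\frac{\|\vy_* - \vhy\|}{\|\vhy\|} \leq \kappa(\ma_p)^2\,\epsilon\,
\left(\frac{\|\vb - \ma_p\vhy\|}{\|\ma_p\|\,\|\vhy\|} + 1 + \epsilon\right).
\]

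Second, because the triangular relations $\mr_s\vx_* = \vy_*$ and $\mr_s\vhx = \vhy$ are identical to those in Section~\ref{s_ass1}, Lemma~\ref{l_4} applies verbatim and gives $\|\vx_* - \vhx\|/\|\vhx\| \leq \kappa(\mr_s)\,\nu\,\|\vy_* - \vhy\|/\|\vhy\|$ with $\nu = \|\mr_s\vhx\|/(\|\mr_s\|\,\|\vhx\|) \leq 1$. Substituting the first-step bound into this inequality produces the claimed estimate. There is no real obstacle, the argument being a direct composition of two already established results; the only point needing care is confirming that the residual in Lemma~\ref{l_a2} specializes to the \emph{preconditioned} residual $\|\vb - \ma_p\vhy\|$ rather than the original residual $\|\vb - \ma\vhx\|$, which is automatic once $\ma_p$ is substituted for $\ma$ throughout.
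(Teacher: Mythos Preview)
Your proposal is correct and follows essentially the same route as the paper: apply Lemma~\ref{l_a2} to (\ref{e_l31}) with $\ma_p$ in place of $\ma$, and then feed the resulting bound on $\|\vy_*-\vhy\|/\|\vhy\|$ into Lemma~\ref{l_4}. The paper's proof is just these two sentences, so your write-up is in fact more thorough in checking hypotheses than the original.
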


\begin{proof}$\ $
Apply  Lemma~\ref{l_a2} to the system~(\ref{e_l31}),
\begin{equation*}
\frac{\|\vy_*-\vhy\|}{\|\vhy\|}\leq \kappa(\ma_p)^2\>\epsilon\,
\left(\frac{\|\vb-\ma_p\vhy\|}{\|\ma_p\|\|\vhy\|}+1+\epsilon\right),
\end{equation*}
and substitute the above into Lemma~\ref{l_4}.
\end{proof}

Lemma~\ref{l_a3} also holds for the solution from the perturbed Blendenpik problem
\begin{align*}
\|(\ma_p+\me_p)\vy_*-\vb\|_2&=\min_{\vy}{\|(\ma_p+\me_p)\vy-\vb\|_2}, \qquad 
\mr_s\vx=\vy.
\end{align*}
Lemma~\ref{l_a3} implies that, to first order, the relative error in $\vhx$ is bounded by
\begin{equation*}
\frac{\|\vx_*-\vhx\|}{\|\vhx\|}\lesssim \kappa(\mr_s)\ \kappa(\ma_p)^2\>\epsilon\,
\max\left\{1,\frac{\|\vb-\ma_p\vhy\|}{\|\ma_p\|\|\vhy\|}\right\}.
\end{equation*}
Numerical experiments indicate that this bound can be much smaller than the actual 
error.

The alternative bound in Lemma~\ref{l_a6} below perturbs all instances of the preconditioner 
$\mr_s$ by the same  matrix~$\me_s$.

\begin{lemma}\label{l_a6}
Let $\me_s\in\rmn$,  $\epsilon\equiv \tfrac{\|\me_s\|}{\|\mr_s\|}$, $\kappa(\mr_s)\epsilon<1$,
and
\begin{equation}
\widehat{\ma}_p\equiv \ma(\mr_s+\me_s)^{-1},\qquad
\widehat{\ma}_p^T\widehat{\ma}_p\vhy=\widehat{\ma}_p^T\vb,\qquad
\mr_s\vhx=\vhy.\label{e_t31}
\end{equation}
If $\vhy\neq \vzero$ and $\vhx\neq \vzero$, then
\begin{equation*}
\frac{\|\vx_*-\vhx\|}{\|\vhx\|}\leq \kappa(\mr_s)\,\nu\,\kappa(\ma_p)^2\, \eta\,\epsilon
\left(1+\frac{\|\vb-\ma_p\vhy\|}{\|\ma_p\|\|\vhy\|}+\eta\epsilon\right),
\end{equation*}
where 
\begin{equation*}
\nu\equiv \frac{\|\mr_s\vhx\|}{\|\mr_s\| \|\vhx\|}\leq 1\qquad \text{and}\qquad
\eta\equiv \frac{\kappa(\mr_s)}{1-\kappa(\mr_s)\epsilon}. 
\end{equation*}
\end{lemma}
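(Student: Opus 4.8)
The plan is to read Lemma~\ref{l_a6} as a specialization of Lemma~\ref{l_a3}: a symmetric perturbation of the preconditioner $\mr_s$ induces a symmetric perturbation of $\ma_p$, and once that induced perturbation is shown to have relative size at most $\eta$, the bound of Lemma~\ref{l_a3} transfers almost verbatim. First I would recycle the factorization from the proofs of Lemma~\ref{l_5} and Theorem~\ref{t_2}. Setting $\mf\equiv\me_s\mr_s^{-1}$ and $\mf_p\equiv(\mi+\mf)^{-1}\mf$, I write
\begin{equation*}
\widehat{\ma}_p=\ma(\mr_s+\me_s)^{-1}=\ma_p(\mi+\mf)^{-1}=\ma_p(\mi-\mf_p),
\end{equation*}
which is valid because $\kappa(\mr_s)\epsilon<1$ forces $\|\mf\|\leq\|\me_s\|\|\mr_s^{-1}\|\leq\kappa(\mr_s)\epsilon<1$, so that $\mi+\mf$ is invertible.

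Next I would introduce the equivalent matrix perturbation $\me_p\equiv\widehat{\ma}_p-\ma_p=-\ma_p\mf_p$, so that $\widehat{\ma}_p=\ma_p+\me_p$. The point is that the hypothesized system~(\ref{e_t31}) then becomes $(\ma_p+\me_p)^T(\ma_p+\me_p)\vhy=(\ma_p+\me_p)^T\vb$ together with $\mr_s\vhx=\vhy$, which is exactly the pair~(\ref{e_l31})--(\ref{e_l32}) of Lemma~\ref{l_a3} for this specific $\me_p$, with both factors carrying the same perturbation. Applying Lemma~\ref{l_a3} with relative perturbation $\epsilon'\equiv\|\me_p\|/\|\ma_p\|$ then yields
\begin{equation*}
\frac{\|\vx_*-\vhx\|}{\|\vhx\|}\leq\kappa(\mr_s)\,\nu\,\kappa(\ma_p)^2\,\epsilon'\left(\frac{\|\vb-\ma_p\vhy\|}{\|\ma_p\|\|\vhy\|}+1+\epsilon'\right).
\end{equation*}

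The last step is to replace $\epsilon'$ by $\eta$. Because $\me_p=-\ma_p\mf_p$ factors $\ma_p$ out on the left, I get $\epsilon'=\|\me_p\|/\|\ma_p\|\leq\|\mf_p\|$, and the Neumann-series estimate already used in Lemma~\ref{l_5} gives $\|\mf_p\|\leq\|\mf\|/(1-\|\mf\|)\leq\kappa(\mr_s)\epsilon/(1-\kappa(\mr_s)\epsilon)=\eta$. Since $\vhy$, and hence the residual factor, is fixed and the right-hand side above is increasing in $\epsilon'$, substituting $\epsilon'\leq\eta$ produces precisely the claimed inequality.

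The step needing the most care is the identification $\me_p=-\ma_p\mf_p$ together with the cancellation $\|\me_p\|/\|\ma_p\|\leq\|\mf_p\|$: this is what keeps the induced perturbation of $\ma_p$ at size $\eta$ and what lets Lemma~\ref{l_a3} be invoked with both factors perturbed identically. I would also flag that this route is deliberately lossy, which is in fact its purpose. A sharper argument observes that $\widehat{\ma}_p$ shares the column space of $\ma$, so $\widehat{\ma}_p\vhy=\ma_p\vy_*$ and thus $\vy_*=(\mi-\mf_p)\vhy$, giving the far tighter estimate $\|\vy_*-\vhy\|/\|\vhy\|\leq\|\mf_p\|\leq\eta$. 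Routing instead through Lemma~\ref{l_a3} is exactly what manufactures the pessimistic $\kappa(\ma_p)^2\kappa(\mr_s)^2$-type condition number flagged in Remark~\ref{r_2}.
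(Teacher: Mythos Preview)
Your proof is correct and rests on the same core identity as the paper's: writing $\widehat{\ma}_p=\ma_p(\mi-\mf_p)$ with $\|\mf_p\|\leq\eta$. The difference is purely organizational. The paper keeps the multiplicative form $\ma_p(\mi-\me_p)$ (their $\me_p$ is your $n\times n$ matrix $\mf_p$), manipulates the perturbed normal equations directly to obtain
\[
\vy_*-\vhy=(\ma_p^T\ma_p)^{-1}\bigl(\me_p^T\ma_p^T(\vb-\ma_p\vhy)+(\mi-\me_p)^T\ma_p^T\ma_p\me_p\vhy\bigr),
\]
bounds this by $\kappa(\ma_p)^2\|\me_p\|(\rho+1+\|\me_p\|)$, and then invokes Lemma~\ref{l_4}. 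You instead pass to the additive perturbation $\me_p=-\ma_p\mf_p\in\rmn$ and recognize the resulting system as an instance of Lemma~\ref{l_a3}, which already packages the normal-equation step together with Lemma~\ref{l_4}. Your route is slightly more economical and makes explicit that Lemma~\ref{l_a6} is a specialization of Lemma~\ref{l_a3}; the paper's route avoids the extra submultiplicativity loss in $\|\ma_p\mf_p\|\leq\|\ma_p\|\|\mf_p\|$, but this makes no difference to the final bound since both paths end at $\|\mf_p\|\leq\eta$.

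Your closing remark that $\range(\widehat{\ma}_p)=\range(\ma_p)$ forces $\vy_*=(\mi-\mf_p)\vhy$ and hence $\|\vy_*-\vhy\|/\|\vhy\|\leq\eta$ is correct and is not in the paper; it nicely quantifies just how much slack Lemma~\ref{l_a6} deliberately leaves on the table.
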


\begin{proof}$\ $
We start by bounding the relative error in $\vhy$.
Write
\begin{equation*}
\widehat{\ma}_p=\ma(\mr_s+\me_s)^{-1}=\underbrace{\ma\mr_s^{-1}}_{\ma_p}
(\mi+\underbrace{\me_s\mr_s^{-1}}_{\mf})^{-1}=
\ma_p(\mi-\underbrace{(\mi+\mf)^{-1}\mf}_{\me_p})
=\ma_p(\mi-\me_p),
\end{equation*}
where 
$\mf\equiv \me_s\mr_s^{-1}$ and $\me_p\equiv (\mi+\mf)^{-1}\mf$.
Then (\ref{e_t31}) can be written as 
\begin{equation*}
\widehat{\ma}_p^T(\ma_p\vhy-\vb)=\widehat{\ma}_p^T\ma_p\me_p\vhy.
\end{equation*}
With $\widehat{\ma}_p=\ma_p(\mi-\me_p)$ this gives
\begin{equation*}
(\mi-\me_p)^T\ma_p^T(\vb-\ma_p\vhy)=(\mi-\me_p)^T\ma_p^T\ma_p\me_p\vhy.
\end{equation*}
Rearrange, 
\begin{equation*}
\ma_p^T(\vb-\ma_p\vhy)=\me_p^T\ma_p^T(\vb-\ma_p\vhy)+
(\mi-\me_p)^T\ma_p^T\ma_p\me_p\vhy.
\end{equation*}
Multiply by $(\ma_p^T\ma_p)^{-1}$, and let $\vy_*\equiv(\ma_p^T\ma_p)^{-1}\ma_p^T\vb$
 be the solution of (\ref{e_pne})
\begin{equation*}
\vy_*-\vhy=(\ma_p^T\ma_p)^{-1}\left(\me_p^T\ma_p^T(\vb-\ma_p\vhy)
+(\mi-\me_p)^T\ma_p^T\ma_p\me_p\vhy\right).
\end{equation*}
Take norms and use the fact that $\kappa(\ma_p^T\ma)=\kappa(\ma_p)^2$,
\begin{align*}
\frac{\|\vy_*-\vhy\|}{\|\vhy\|}&
\leq \kappa(\ma_p)^2\,\left(\|\me_p\|\,\frac{\|\vb-\ma_p\vhy\|}{\|\ma_p\|\|\vhy\|} +
(1+\|\me_p\|)\|\me_p\|\right)\\
&= \kappa(\ma_p)^2\,\|\me_p\|\,\left(\frac{\|\vb-\ma_p\vhy\|}{\|\ma_p\|\|\vhy\|} +
1+\|\me_p\|\right).
\end{align*}
At last bound
\begin{equation*}
\|\me_p\|\leq \frac{\|\mf\|}{1-\|\mf\|}\leq 
\frac{\|\me_s\| \|\mr_s^{-1}\|}{1-\|\me_s\| \|\mr_s^{-1}\|}
\leq \frac{\kappa(\mr_s)\epsilon}{1-\kappa(\mr_s)\epsilon}=\eta\,\epsilon,
\end{equation*}
so that 
\begin{equation*}
\frac{\|\vy_*-\vhy\|}{\|\vhy\|}\leq \kappa(\ma_p)^2\,\eta\,  \epsilon\,
\left(1+\frac{\|\vb-\ma_p\vhy\|}{\|\ma_p\|\|\vhy\|} +\eta\epsilon\right).
\end{equation*}
Substitute this bound into Lemma~\ref{l_4}.
\end{proof}

Lemma~\ref{l_a6} implies that, to first order, the error in $\vhx$ is bounded by
\begin{equation*}
\frac{\|\vx_*-\vhx\|}{\|\vhx\|}\lesssim \kappa(\mr_s)^2\,\kappa(\ma_p)^2\, \epsilon\,
\max\left\{1, \frac{\|\vb-\ma_p\vhy\|}{\|\ma_p\|\|\vhy\|}\right\}.
\end{equation*}
Numerical experiments indicate that this bound can be much larger than the actual 
error. 

\subsection{Alternative perturbation bound for the half preconditioned normal equations}\label{s_ahpne}
Lemma~\ref{l_a7} presents an alternative perturbation bound for the half preconditioned
normal equations, under the following assumptions.

Let $\ma\in\rmn$ have $\rank(\ma)=n$. Let $\mr_s\in\rnn$ be a fixed
nonsingular matrix and $\ma_p\equiv \ma\mr_s^{-1}$.
The exact problem is
\begin{equation*}
\ma_p^T\ma\vx_*=\ma_p^T\vb.
\end{equation*}
Since $\ma^T\ma$ and $\mr_s$ are nonsingular, so is the half preconditioned
matrix $\ma_p^T\ma$.

Lemma~\ref{l_a7} below perturbs both matrices by an additive perturbation.

\begin{lemma}\label{l_a7}
Let $\me_p, \me_A\in\rmn$,
$\epsilon\equiv \max\{\tfrac{\|\me_p\|}{\|\ma_p\|}, \tfrac{\|\me_A\|}{\|\ma\|}\}$, and
\begin{equation*}
(\ma_p+\me_p)^T(\ma+\me_A)\vhx=(\ma_p+\me_p)^T\vb.
\end{equation*}
If $\ma_p^T\ma$ is nonsingular and $\vhx\neq \vzero$, then
\begin{equation*}
\frac{\|\vx_*-\vhx\|}{\|\vhx\|}\leq \kappa(\ma_p^T\ma)\, \nu\, \epsilon\,
\left(\frac{\|\vb-\ma\vhx\|}{\|\ma\|\|\vhx\|}+1+\epsilon\right), \qquad 
\nu\equiv \frac{\|\ma_p\|\|\ma\|}{\|\ma_p^T\ma\|}\geq 1.
\end{equation*}
\end{lemma}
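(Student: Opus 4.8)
The plan is to mirror the proof of Lemma~\ref{l_a2}, the only difference being that here the two occurrences of the perturbed matrix involve the \emph{distinct} factors $\ma_p+\me_p$ and $\ma+\me_A$ rather than a single perturbed matrix $\ma+\me$. First I would expand the perturbed system $(\ma_p+\me_p)^T(\ma+\me_A)\vhx=(\ma_p+\me_p)^T\vb$ and rearrange it to isolate the exact half-preconditioned residual $\ma_p^T(\vb-\ma\vhx)$ on the left-hand side, with the four cross terms collected on the right.

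The crucial algebraic step, and really the only point requiring care, is to group those cross terms so that the half-preconditioned residual $\vb-\ma\vhx$ appears naturally rather than the shifted quantity $\vb-(\ma+\me_A)\vhx$. Concretely, I would verify the identity
\begin{equation*}
\ma_p^T(\vb-\ma\vhx)=\me_p^T(\ma\vhx-\vb)+(\ma_p+\me_p)^T\me_A\vhx,
\end{equation*}
which pairs $\me_p^T$ against the residual and the full left factor $(\ma_p+\me_p)^T$ against the small perturbation $\me_A\vhx$. This grouping is the exact analogue of $\me^T(\ma\vhx-\vb)+(\ma+\me)^T\me\vhx$ used in Lemma~\ref{l_a2}; anticipating the final form of the bound is what dictates the choice.

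With this identity established, the rest is routine. Using $\vx_*=(\ma_p^T\ma)^{-1}\ma_p^T\vb$, I would multiply through by $(\ma_p^T\ma)^{-1}$ to obtain $\vx_*-\vhx$, take norms, and write $\|(\ma_p^T\ma)^{-1}\|=\kappa(\ma_p^T\ma)/\|\ma_p^T\ma\|$. Factoring $\|\ma_p^T\ma\|^{-1}=\nu/(\|\ma_p\|\|\ma\|)$ introduces $\nu$, after which the estimates $\|\me_p\|\le\epsilon\|\ma_p\|$ and $\|\me_A\|\le\epsilon\|\ma\|$ turn the first term into $\epsilon\,\|\vb-\ma\vhx\|/(\|\ma\|\|\vhx\|)$ and the second into $(1+\epsilon)\epsilon$, giving the stated bound. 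Finally, $\nu\ge 1$ follows immediately from the submultiplicative inequality $\|\ma_p^T\ma\|\le\|\ma_p\|\|\ma\|$. I expect no genuine obstacle: the work is entirely in choosing the grouping above so that the residual term matches the one in Lemma~\ref{l_ls}.
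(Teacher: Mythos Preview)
Your proposal is correct and follows essentially the same approach as the paper: the paper's proof is the terse two-liner ``multiply the perturbed system by $(\ma_p^T\ma)^{-1}$ and rearrange'' to obtain exactly your identity $\vx_*-\vhx=(\ma_p^T\ma)^{-1}\bigl(\me_p^T(\ma\vhx-\vb)+(\ma_p+\me_p)^T\me_A\vhx\bigr)$, then ``take norms.'' Your write-up simply spells out the grouping and the norm-taking that the paper leaves implicit.
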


\begin{proof}$\ $
Multiply the perturbed system by $(\ma_p^T\ma)^{-1}$ and rearrange,
\begin{equation*}
\vx_*-\vhx=(\ma_p^T\ma)^{-1}\left(\me_p^T(\ma\vhx-\vb)+
(\ma_p+\me_p)^T\me_A\vhx\right).
\end{equation*}
Then take norms.
\end{proof}

Lemma~\ref{l_a7} implies that, to first order, the error in $\vhx$ is bounded by
\begin{equation*}
\frac{\|\vx_*-\vhx\|}{\|\vhx\|}\leq \kappa(\ma_p^T\ma)\, \nu\, \epsilon\, 
\max\left\{1, \frac{\|\vb-\ma\vhx\|}{\|\ma\|\|\vhx\|}\right\}.
\end{equation*}
Numerical experiments indicate that this bound can be much smaller than the actual 
error.

\subsection{Perturbation bounds for preconditioned least squares problems}\label{s_apls}
We derive two perturbation bounds for right preconditioned least squares solvers, 
in the spirit of Blendenpik \cite[Algorithm 1]{Blendenpik},
with the same type of perturbations as in Theorem~\ref{t_3}. The bound in Theorem~\ref{t_b1}
is better than the bound in Theorem~\ref{t_3},  and the bound in Theorem~\ref{t_b2} is worse.
Hence the comparison, in regard to conditioning, of the preconditioned normal equations versus the
right preconditioned least squares problem is inconclusive.

Right preconditioned least squares problems compute
\begin{equation*}\label{e_b}
\|\ma_p\vy_*-\vb\|_2=\min_{\vy}{\|\ma_p\vy-\vb\|_2}, \qquad
\mr_s\vx=\vy.
\end{equation*}

The perturbations below correspond to those for $\ma_2$ in (\ref{e_perturb}).

\begin{theorem}\label{t_b1}
Let $\me\in\rmn$, $\me_p\in\rmn$,
$\epsilon\equiv \|\me_p\|/\|\ma_p\|$, $\kappa(\ma_p)\epsilon<1$ and
\begin{align*}
\|(\ma_p+\me_p)\hat{\vy}-\vb\|=\min_{\vy}{\|(\ma_p+\me_p)\vy-\vb\|}, \qquad
\mr_s\vhx=\vhy.
\end{align*}
If $\vhy\neq \vzero$ and $\vhx\neq \vzero$, then
\begin{equation*}
\frac{\|\vx_*-\vhx\|}{\|\vhx\|}\leq \kappa(\mr_s)\ \nu\,\kappa(\ma_p)\>\epsilon\,
\left(1+\kappa(\ma_p)\frac{\|\vb-(\ma_p+\me_p)\vhy\|}{\|\ma_p\|\|\vhy\|}\right),
\qquad \nu\equiv \frac{\|\mr_s\vhx\|}{\|\mr_s\|\|\vhx\|}\leq 1.
\end{equation*}
\end{theorem}

\begin{proof}$\ $
This follows from Lemmas \ref{l_ls} and~\ref{l_4}.
\end{proof}

A comparison with Theorem~\ref{t_3} or (\ref{e_pne1a}) shows that  Theorem~\ref{t_b1}
is slightly better because it lacks the additional factor $\kappa(\mr_s)$ that amplifies the 
least squares residual.

The perturbations below correspond to those for $\ma_1$ in (\ref{e_perturb}).

\begin{theorem}\label{t_b2}
Let $\me_s\in\rmn$,  $\epsilon\equiv \|\me_s\|/\|\mr_s\|$, $\kappa(\mr_s)\epsilon<1$,
and
\begin{align*}
\|\ma(\mr_s+\me_s)^{-1}\hat{\vy}-\vb\|=\min_{\vy}{\|\ma(\mr_s+\me_s)^{-1}\vy-\vb\|}, \qquad
\mr_s\vhx=\vhy.
\end{align*}
If $\vhy\neq \vzero$ and $\vhx\neq \vzero$, then
\begin{equation*}
\frac{\|\vx_*-\vhx\|}{\|\vhx\|}\leq \kappa(\mr_s)\,\nu\,\kappa(\ma_p)^2\, \eta\,\epsilon
\left(1+\frac{\|\vb-\ma_p\vhy\|}{\|\ma_p\|\|\vhy\|}+\eta\epsilon\right),
\end{equation*}
where 
\begin{equation*}
\nu\equiv \frac{\|\mr_s\vhx\|}{\|\mr_s\| \|\vhx\|}\leq 1\qquad \text{and}\qquad
\eta\equiv \frac{\kappa(\mr_s)}{1-\kappa(\mr_s)\epsilon}. 
\end{equation*}
\end{theorem}

\begin{proof}$\ $
This follows from Lemma~\ref{l_a6}, since with 
$\widehat{\ma}_p\equiv \ma(\mr_s+\me_s)^{-1}$, the least squares problem 
$\min_{\vy}{\|\widehat{\ma}_p\vy-\vb\|}$ is mathematically equivalent to the system
$\widehat{\ma}_p^T\widehat{\ma}_p\vhy=\widehat{\ma}_p^T\vb$.
\end{proof}

Theorem~\ref{t_b2} implies that, to first order, the error in $\hat{\vx}$ is bounded by
\begin{equation*}
\frac{\|\vx_*-\vhx\|}{\|\vhx\|}\lesssim \kappa(\mr_s)^2\,\kappa(\ma_p)^2\, \epsilon\,
\max\left\{1, \frac{\|\vb-\ma_p\vhy\|}{\|\ma_p\|\|\vhy\|}\right\}.
\end{equation*}
Hence Theorem~\ref{t_b2} is worse than Theorem~\ref{t_3} or (\ref{e_pne1a}) 
because its bound is always proportional to $\kappa(\mr_s)^2\,\kappa(\ma_p)^2\, \epsilon$,
regardless of the size of the least squares residual.

\bibliography{HHbib}
\bibliographystyle{siam}

\end{document}